\documentclass[12pt,reqno]{amsart}
\usepackage[a4paper,margin=2.4cm]{geometry}
\usepackage{amssymb,amsmath,amsthm,mathtools}
\usepackage{enumerate}
\usepackage{cases}

\usepackage[colorlinks=true, linkcolor=blue, urlcolor=magenta, hypertexnames=false]{hyperref}

\renewcommand{\Re}{\operatorname{Re}}

\newcommand{\N}{{\mathbb N}}
\newcommand{\Z}{{\mathbb Z}}
\newcommand{\R}{{\mathbb R}}
\newcommand{\C}{{\mathbb C}}

\newcommand{\dd}{\mathrm d}
\newcommand{\ii}{\mathrm i}
\newcommand{\ee}{\mathrm e}

\newcommand{\g}{{\mathfrak{g}}}

\numberwithin{equation}{section}

\theoremstyle{plain}
\newtheorem{theorem}{\bf Theorem}[section]
\newtheorem*{theorem*}{Theorem}
\newtheorem{lemma}[theorem]{\bf Lemma}

\newtheorem*{proposition*}{\bf Proposition}

\theoremstyle{definition}
\newtheorem{definition}[theorem]{\bf Definition}
\newtheorem*{definition*}{\bf Definition}

\theoremstyle{remark}
\newtheorem*{remark*}{\bf Remark}
\newtheorem{remark}[theorem]{\bf Remark}

\newcommand{\ddiv}{\mathrm{div}}

\newcommand{\grad}{{\nabla}}
\newcommand{\lap}{{\Delta}}

\newcommand{\abs}[1]{\left|#1\right|}

\newcommand{\paren}[1]{\left( #1 \right)}

\newcommand{\pochhammer}[2]{\left( #1 \right)_{#2}}
\newcommand{\floor}[1]{\lfloor #1 \rfloor}
\newcommand{\ceil}[1]{\lceil #1 \rceil}
  
\begin{document}

\title[Optimal fractional discrete Hardy inequalities]{Optimal fractional discrete Hardy inequalities \\ on the half-line}

\author{Franti\v{s}ek \v{S}tampach}
\address{Department of Mathematics, Faculty of Nuclear Sciences and Physical Engineering, Czech Technical University in Prague, Trojanova 13, 12000 Prague~2, Czech Republic.}
\email{stampfra@cvut.cz}

\author{Jakub Waclawek}
\address{Department of Mathematics, Faculty of Nuclear Sciences and Physical Engineering, Czech Technical University in Prague, Trojanova 13, 12000 Prague~2, Czech Republic.}
\email{waclajak@cvut.cz}

\subjclass[2020]{Primary 39A12, 47J05; Secondary 26D15}

\keywords{discrete Hardy inequality, fractional Hardy inequality, optimality}

\date{\today}

\begin{abstract}
We consider a Toeplitz realisation of the fractional discrete Laplacian $(-\lap)^{\alpha}$ on the half-line $\N$ as a compression of the full-line fractional discrete Laplacian to $\ell^{2}(\N)$. For all $\alpha>0$, we prove that the fractional Hardy inequality
\[
 (-\lap)^{\alpha}\geq\frac{4^{\alpha}\Gamma^2(\alpha+1/2)}{\pi}\frac{\Gamma(2\,\cdot\,-1)}{\Gamma(2\,\cdot\,-1+2\alpha)}
\]
holds on $\ell^{2}(\N)$ and is optimal in a strong sense. In particular, we show that the inequality cannot be improved and equality is not attained by any nonzero element of $\ell^{2}(\N)$. As a consequence, we deduce a fractional generalisation of the discrete Birman inequality.
\end{abstract}

\maketitle

\section{Introduction and main results} 

The history of the classical \textit{Hardy inequality} dates back to the beginning of the twentieth century~\cite{kuf-mal-per_06}. In pursuit of an elementary proof of the \textit{Hilbert inequality}, Hardy discovered his celebrated inequality, which may be viewed as a lower bound for the discrete Laplace operator on the half-line. More precisely, Hardy's original inequality states
\[
 -\lap_{\N}\geq\rho^{\mathrm{H}}, \quad \mbox{ with } \rho_n^{\mathrm{H}}\coloneq\frac{1}{4n^{2}},
\]
understood in the sense of quadratic forms on $\ell^{2}(\N)$, where the half-line discrete Laplacian is defined by the formula $\lap_\N u_n \coloneq u_{n-1} - 2 u_n + u_{n+1}$ for all $n\in\N$, with the convention $u_0 \coloneq 0$.

Analogous quadratic form inequalities $-\lap\ge \rho$, where $\rho$ denotes the operator of multiplication by a non-negative \textit{Hardy weight}, remain true in various Hilbert spaces and have found a wide range of applications in areas such as probability theory, the theory of partial differential equations, or mathematical physics. In the latter domain, the Hardy inequality, for example, reflects the transience of Brownian motion described via the heat equation. It is a classical result that a Brownian particle in $\R^d$ almost surely visits every bounded set only finitely many times if $d\ge3$. Conversely, Brownian motion in $\R$ and $\R^2$ is recurrent. In parallel, the self-adjoint realisation of the Laplacian on the Lebesgue space $L^2(\R^d)$ is \textit{subcritical} (that is, there exists a non-trivial Hardy weight $\rho\geq0$ such that $-\lap_{\R^d} \ge \rho$) whenever $d\ge3$, whereas it is \textit{critical} (i.e. not subcritical) for $d\in\{1,2\}$.

Similar criticality transitions arise also in the context of the \textit{fractional Hardy inequalities} $(-\lap)^\alpha\ge \rho$ for a positive (non-integer) power $\alpha$ of the Laplace operator. These operators play a fundamental role in the analysis of anomalous transport and diffusion processes~\cite{Meerschaert-diffusion}, and they also appear in a range of physical models, including, for instance, the relativistic Schrödinger-type equation~\cite{Mazya-Schrodinger}.

In \cite{Herbst-frac_lap}, it was observed that $(-\lap_\R)^\alpha$ is subcritical in $L^2(\R)$ if and only if $\alpha\in(0,1/2)$. For this range of the parameter $\alpha$, the inequality
$(-\lap_\R)^\alpha\ge \rho^{\text{He}}_{\alpha}$ holds with the weight
\[
    \rho^{\mathrm{He}}_{\alpha}(x) 
      \coloneq 4^\alpha \frac{\Gamma^{2}\!\paren{\frac{1+2\alpha}{4}}}{\Gamma^{2}\!\paren{\frac{1-2\alpha}{4}}} \frac{1}{\abs{x}^{2\alpha}},
\]
where $\Gamma$ is the Euler gamma function. The optimality of the constant was proved in~\cite{Yafaev_99-sharp_frac_lap}. 

The discrete analogy on $\Z$ resembles the continuous setting. The discrete fractional Laplacian $(-\lap_{\Z})^{\alpha}$ is subcritical in $\ell^{2}(\Z)$ if and only if $\alpha\in(0,1/2)$. For these powers $\alpha$, the fractional Hardy inequality $(-\lap_{\Z})^{\alpha}\geq\rho^{\mathrm{CR}}_{\alpha}$ has been deduced in \cite{cia-ron_18} with
\[
 \rho_{\alpha}^{\mathrm{CR}}(n)\coloneq 4^\alpha \frac{\Gamma^2\!\left(\frac{1+2\alpha}{4}\right)}{\Gamma^2\!\left(\frac{1-2\alpha}{4}\right)}\frac{\Gamma \left(|n| + \frac{1-2 \alpha}{4}\right) \Gamma \left(|n| + \frac{3-2 \alpha}{4}\right)}{\Gamma \left(|n| + \frac{1+2 \alpha}{4}\right) \Gamma \left(|n| + \frac{3+2 \alpha}{4}\right)}.
\]
Moreover, the weight $\rho^{\mathrm{CR}}_{\alpha}$ turns out to be \emph{optimal} (see Definition \ref{def:optimality}) for all $\alpha\in(0,1/2)$, as proved later in \cite{kel-nit_23}. 
The sequence $\rho_{\alpha}^{\mathrm{CR}}$ is asymptotically equivalent to $\rho^{\text{He}}_\alpha(n)$ for $\abs{n}\to\infty$. 

The situation on the half-line is more subtle, as there exist at least two non-equivalent definitions of general positive powers of the Laplace operator. In particular, due to the inherently nonlocal nature of fractional derivatives, the definition depends on whether the functions, resp. sequences, are formally extended by zero to the negative half-axis or are instead treated without such an extension.

In the continuum, both options were studied in~\cite{bogdan-dyda_11-frac_lap} for $\alpha\in(0,1)$. Regarding the latter setting, the fractional powers of the half-line Laplacian $(-\lap_{\R_+})^\alpha$ satisfy the inequality
\begin{equation}
    \label{eq:cont_frac_hardy_on_R_++}
    \int_0^\infty \overline{\varphi(x)} (-\lap_{\R_+})^\alpha\varphi(x)\dd x 
      \ge \frac{\Gamma^2(\alpha+1/2) - \Gamma(2\alpha)\sin(\pi\alpha)}{\pi}\int_0^\infty \frac{\abs{\varphi(x)}^2}{x^{2\alpha}} \dd x
\end{equation}
for all smooth functions $\varphi\in C_0^\infty(\R_+)$ with compact support in $\R_+\equiv(0,\infty)$. Notice that while the constant is sharp for all $\alpha\in(0,1)$, it is only positive if $\alpha\neq1/2$. The case $\alpha=1/2$ was studied, for example, in~\cite{roy-sahu_26-crit_frac_lap}. On the other hand, if we consider the test functions $\varphi\in C_0^\infty(\R_+)$ to be formally extended by zero to the negative half-axis, the fractional Hardy inequality
\begin{equation}
    \label{eq:cont_frac_hardy_on_R_+}
    \int_0^\infty \overline{\varphi(x)} (-\lap_\R)^\alpha\varphi(x)\dd x 
      \ge \frac{\Gamma^2(\alpha+1/2)}{\pi}\int_0^\infty \frac{\abs{\varphi(x)}^2}{x^{2\alpha}} \dd x,
\end{equation}
holds with a different optimal constant, which is positive for all $\alpha\in(0,1)$.

\subsection{The state of the art}
In this paper, we focus on analogous half-line fractional Hardy inequalities in the discrete setting, where the literature is less complete. The half-line discrete fractional Laplacian $(-\lap_\N)^\alpha$ was initially introduced and investigated in~\cite{gerhat-krejcirik-stampach_23_criticality}, where the authors proved that $(-\lap_\N)^\alpha$ is subcritical if and only if $\alpha\in(0,3/2)$ and, within this range, they identified explicit non-optimal Hardy weights. Optimal Hardy weights for $(-\lap_\N)^\alpha$ were found only recently for $\alpha \in (0,1]$ in the form
\begin{equation}
    \label{eq:def:DFF_weight}
    \rho^{\rm DF}_\alpha(n)
      \coloneq4^\alpha \frac{\Gamma^{2}\!\paren{\frac{3+2\alpha}{4}}}{\Gamma^{2}\!\paren{\frac{3-2\alpha}{4}}} \frac{\Gamma\!\paren{n+\frac{5-2\alpha}{4}} \Gamma\!\paren{n-\frac{1+2\alpha}{4}}}{\Gamma\!\paren{n+\frac{5+2\alpha}{4}} \Gamma\!\paren{n-\frac{1-2\alpha}{4}}}
\end{equation}
in~\cite{das-fernandez_26-frac_hardy}. Interestingly, these weights exhibit the asymptotic decay $n^{-2\alpha}$, as $n\to\infty$, for every $\alpha\in(0,1]$, including the critical value $\alpha=1/2$, and the optimal constant differs from its continuous counterpart \eqref{eq:cont_frac_hardy_on_R_++}.

Concerning the discrete Laplacian on the half-line defined as a compression of the full-line operator $-\lap_{\Z}$, see~\eqref{eq:def-frac-lap_abstract} for the definition, the existing literature has addressed only the integer power inequalities, i.e. discrete Hardy--Rellich--Birman inequalities. Hereafter, we denote this half-line discrete Laplacian simply by $-\lap$, omitting any index for convenience. The derivation of the classical Hardy--Rellich--Birman weights $\rho$, satisfying $(-\lap)^\ell\ge\rho$ on $\ell^2(\N)$, for $\ell\in\N$, which mirror the continuous weights in \eqref{eq:cont_frac_hardy_on_R_+}, was first presented in \cite{huang-ye_24-hardy}. The determination of optimal weights was subsequently carried out in \cite{stampach-waclawek_24-birman} and extended to the $\ell^p$-setting in~\cite{stamp_wacl_26-dics_p-birman} and particularly in \cite{stamp_wacl_26-opt_p-birman}, where the authors discovered optimal higher-order Hardy inequalities of the simple form
\begin{equation}
    \label{eq:stamp_wacl_opt_disc_birm}
    \sum_{n=1}^\infty \overline{u_n} (-\lap)^\ell u_n
      \ge\frac{\Gamma^2(\ell+1/2)}{\pi}\sum_{n=1}^\infty\frac{\abs{u_n}^2}{(n-1/2)n(n+1/2)\cdots(n+\ell-1)}
\end{equation}
for all $\ell\in\N$ and $u\in\ell^{2}(\N)$.

The purpose of this paper is to generalise the methods employed in \cite{stampach-waclawek_24-birman,stamp_wacl_26-opt_p-birman} to the framework of fractional Hardy inequalities and to deduce optimal inequalities of the form \eqref{eq:stamp_wacl_opt_disc_birm} for the operator $(-\lap)^\alpha$ with arbitrary $\alpha>0$.

\subsection{Organisation}

In Section \ref{subsec:main_results}, we state our main results (Theorems \ref{thm:opt_frac_hardy} and \ref{thm:clas_frac_hardy}). Section \ref{sec:prelim} introduces the necessary notation and defines the operator $(-\lap)^\alpha$ for $\alpha>0$. In Section \ref{sec:abstract_results}, we formulate our complementary results and develop an abstract method to construct fractional Hardy weights from a given \textit{parameter sequence} $\g$ that satisfies certain sufficient conditions. Lastly, Section \ref{sec:proof_main_results} proves the main results by applying the abstract theory to a specific choice of the parameter sequence.

\subsection{Main results}
\label{subsec:main_results}

Before stating our main results, we recall two definitions of fractional Hardy weights and their optimality adopted from \cite{stamp_wacl_26-opt_p-birman,hake-keller-pogorzelski_26_frac_lap_graph}. The exact definition of the operator $(-\lap)^{\alpha}$ is postponed to \eqref{eq:def-frac-lap_abstract}.

\begin{definition}
    Let $\alpha>0$. A non-negative sequence $\rho=\{\rho_n\}_{n=1}^\infty$ is called a \textit{discrete fractional Hardy weight} if and only if the \textit{discrete fractional Hardy inequality}
    \begin{equation}
        \label{eq:def:frac_hardy_weight}
        \sum_{n=1}^\infty\overline{u_n}(-\lap)^\alpha u_n 
          \ge \sum_{n=1}^\infty \rho_n\abs{u_n}^2
    \end{equation}
    holds for all compactly supported sequences $u\in C_0(\N)$.
\end{definition}

\begin{remark}
    \label{rem:def:frac_hardy_weight}
Since $(-\lap)^\alpha$ determines a bounded operator on $\ell^2(\N)$ with $\|(-\lap)^\alpha\|\leq4^\alpha$, the multiplication operator by a discrete fractional Hardy weight $\rho$ must also be bounded, and the inequality \eqref{eq:def:frac_hardy_weight} straightforwardly extends from $C_0(\N)$ to $\ell^2(\N)$.
\end{remark}

\begin{definition}
    \label{def:optimality}
    A discrete fractional Hardy weight $\rho$ is said to be \textit{optimal} if it exhibits the following three properties:
    \begin{enumerate}[(i)]
        \item 
            \textit{Criticality}: The weight $\rho$ is called \textit{critical} if for any fractional Hardy weight $\tilde{\rho}$, such that $\tilde{\rho}_n\ge\rho_n$ for all $n\in\N$, it follows that $\tilde{\rho}=\rho$.
        \item
            \textit{Non-attainability}: The weight $\rho$ is called \textit{non-attainable} if, whenever the equality in~\eqref{eq:def:frac_hardy_weight} is attained for $u\in\ell^2(\N)$, necessarily $u\equiv0$.
        \item 
            \textit{Optimality near infinity}: The weight $\rho$ is called \textit{optimal near infinity} if for any $M\ge 1$ and $\varepsilon>0$ there exists $u\in C_0(\N)$ such that $u_n=0$ for all $n<M$ and
        \begin{equation*}
            \sum_{n=1}^\infty\overline{u_n}(-\lap)^\alpha u_n 
              < (1+\varepsilon) \sum_{n=1}^\infty \rho_n\abs{u_n}^2.
        \end{equation*}
    \end{enumerate}
\end{definition}

\begin{remark}
    \label{rem:optimality}
    We complement the three optimality properties with explanatory remarks:
    \begin{enumerate}[(i)]
        \item 
            Criticality means that the fractional Hardy weight enjoying this property cannot be further improved by a pointwise bigger weight.
        \item 
            In place of non-attainability, the concept of \textit{null-criticality}, which can be viewed as an analogue of the combined properties of criticality and non-attainability, was employed in~\cite{keller-pinchover-pogorzelski_18_hardy_on_graphs,hake-keller-pogorzelski_26_frac_lap_graph}. The authors analyse whether the unique positive generalised eigenvector of the operator $(-\lap)^\alpha - \rho$, called the \textit{Agmon ground state}, belongs to the weighted sequence space $\ell^2(\N,\rho)$.
        \item \label{it:rem:optimality:3}
            It is easy to see that if a fractional Hardy weight exhibits optimality near infinity, then so does any other asymptotically equivalent fractional Hardy weight. It is equivalent to the equality
            \begin{equation}
                \label{eq:rem:opt_near_inf}
                \inf_{\substack{0\neq u\in C_0(\N) \\ u_n=0,\,\forall n<M}} \frac{\sum_{n=1}^\infty\overline{u_n}(-\lap)^\alpha u_n}{\sum_{n=1}^\infty \rho_n\abs{u_n}^2}
                  =1
                  \quad\text{ for all }M\ge 1.
            \end{equation}
    \end{enumerate}
\end{remark}

Our primary result is a fully explicit and optimal discrete fractional Hardy weight.

\begin{theorem}
    \label{thm:opt_frac_hardy}
    Let $\alpha>0$. Then the sequence $\rho^{(\alpha)}$, given by the formula
    \begin{equation}
        \label{eq:thm:opt_frac_hardy}
         \boxed{
          \rho^{(\alpha)}_n
          \coloneq \frac{4^{\alpha}\Gamma^2(\alpha+1/2)}{\pi}\frac{\Gamma(2n-1)}{\Gamma(2n+2\alpha-1)}
          \quad\text{ for }n\in\N,
          }
    \end{equation}
    is an optimal discrete fractional Hardy weight. 
\end{theorem}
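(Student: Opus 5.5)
The plan is a ground-state argument. I will first factorise the quadratic form, then find an explicit generalised eigenvector $g$ of $(-\lap)^\alpha$ with eigenvalue weight $\rho^{(\alpha)}$, and finally use the divergence of $\sum\rho^{(\alpha)}_n g_n^2$ to obtain the three optimality properties.

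\textbf{Factorisation.} Write $(-\lap)^\alpha$ as the Toeplitz operator with symbol $|1-\ee^{\ii\theta}|^{2\alpha}=(4\sin^2(\theta/2))^{\alpha}$. Set $a(\theta)=(1-\ee^{\ii\theta})^\alpha$, which is analytic. By the Brown--Halmos rule $T(\bar a)T(a)=T(|a|^2)$, so
\[
\sum_{n}\overline{u_n}(-\lap)^\alpha u_n=\|T(a)u\|^2 ,
\]
where $T(a)$ is lower triangular with entries $c_k=(-1)^k\binom{\alpha}{k}$. This representation is what should play the role of the parameter-sequence method of \cite{stampach-waclawek_24-birman,stamp_wacl_26-opt_p-birman} in the fractional case.

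\textbf{The ground state.} Guided by the integer case, where $\rho^{(1)}_n=1/(4n(n-1/2))$, I take the candidate
\[
g_n=\frac{\Gamma(n+\alpha-1/2)}{\Gamma(n)}\sim n^{\alpha-1/2}.
\]
I then compute $(T(a)g)_n$ and $((-\lap)^\alpha g)_n$ in closed form. The partial sums $\sum_k(-1)^k\binom{\alpha}{k}g_{n-k}$ are terminating ${}_2F_1$ or ${}_3F_2$ series at unit argument, so they should be evaluable by Chu--Vandermonde or Pfaff--Saalschütz. The goal is the identity $((-\lap)^\alpha g)_n=\rho^{(\alpha)}_n g_n$ for every $n\in\N$. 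The prefactor $4^\alpha\Gamma^2(\alpha+1/2)/\pi$ should come out of a Gauss-type evaluation of the full-line tail sum $\sum_{k}(-1)^k\binom{2\alpha}{\alpha+k}$-like terms. I expect to fix $g$ by matching this identity, adjusting the shift in $g$ if $\Gamma(n+\alpha-1/2)/\Gamma(n)$ turns out not to work exactly.

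\textbf{Hardy inequality from the ground state.} Since the off-diagonal entries of $(-\lap)^\alpha$ are not sign-definite for $\alpha>1$, the usual Picone identity is unavailable. Instead I substitute $u=g v$ and aim to show
\[
\|T(a)(gv)\|^2-\sum_n\rho^{(\alpha)}_n g_n^2|v_n|^2=\sum_{j}\Big|\sum_{k}b_{jk}(v_k-v_{k+1})\Big|^2\ \ (\text{or a similar sum of squares}),
\]
with an explicitly computed matrix $b$. If this direct route stalls, I would fall back on a weighted Cauchy--Schwarz (Schur-test) argument applied row-wise to $T(a)$ with weights built from $g$ and from $h=T(a)g$, which is also explicit. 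In that fallback the key input is a positivity or monotonicity property of $h_n$. \textbf{Establishing this remainder identity, or the positivity needed for the Schur test, uniformly in $\alpha>0$ is the main obstacle}, because the coefficients $c_k$ change sign and the hypergeometric bookkeeping is heavy.

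\textbf{Optimality.} All three properties come from the truncations $g^{(N)}=g\,\phi_N$, where $\phi_N$ is a smooth logarithmic cutoff equal to $1$ on $[1,N]$ and $0$ beyond $N^2$.
\begin{enumerate}[(i)]
\item \emph{Non-attainability and criticality.} We have $\rho^{(\alpha)}_n g_n^2\sim\frac{\Gamma^2(\alpha+1/2)}{\pi}\,n^{-1}$, which is not summable. Hence the ground-state representation shows that the quotient of the form by $\sum\rho^{(\alpha)}|g^{(N)}|^2$ tends to $1$, and that the remainder vanishes only on multiples of $g\notin\ell^2$. This gives non-attainability. For criticality, if $\tilde\rho\ge\rho^{(\alpha)}$ is another Hardy weight, then testing on $g^{(N)}$ gives $\sum(\tilde\rho-\rho^{(\alpha)})|g^{(N)}|^2\to0$, which forces $\tilde\rho=\rho^{(\alpha)}$.
\item \emph{Optimality near infinity.} Here I use Remark~\ref{rem:optimality}(\ref{it:rem:optimality:3}). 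Since $\rho^{(\alpha)}_n\sim\frac{\Gamma^2(\alpha+1/2)}{\pi}n^{-2\alpha}$, the same truncations with the cutoff also vanishing on $[1,M]$ give quotient tending to $1$. This works because the discarded head contributes $O(1)$ while the weighted mass diverges logarithmically.
\end{enumerate}
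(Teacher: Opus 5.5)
\textbf{The proof of the inequality itself is missing.} Several parts of your proposal are right. Your ground state $g_n=\Gamma(n+\alpha-1/2)/\Gamma(n)$ is exactly the paper's $\g^{(\alpha)}$, and your factorisation $\sum_n\overline{u_n}(-\lap)^\alpha u_n=\|T(a)u\|^2$ is correct. However, you never prove the step everything else rests on: that $\sum_n\overline{u_n}(-\lap)^\alpha u_n-\sum_n\rho^{(\alpha)}_n|u_n|^2$ is a sum of non-negative terms. The sum-of-squares identity with an unspecified matrix $b$ is only hoped for, and you yourself flag it as the main obstacle. The fallback, as you state it, cannot work. A Schur test or row-wise Cauchy--Schwarz applied to $T(a)$ bounds $\|T(a)u\|$ from \emph{above}, whereas a Hardy inequality is a lower bound. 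In addition, the entries $(-1)^k\binom{\alpha}{k}$ change sign for $\alpha>1$.

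Your optimality section inherits this gap. The claim that ``the remainder vanishes only on multiples of $g$'' presupposes a remainder of a specific form. Criticality needs more than the quotient tending to $1$, which only says the deficit is $o(\log N)$ and does not force $\tilde\rho=\rho^{(\alpha)}$. It needs the deficit $\sum_n\overline{u^N_n}(-\lap)^\alpha u^N_n-\sum_n\rho^{(\alpha)}_n|u^N_n|^2$ at $u^N=g\phi_N$ to actually tend to $0$. In the paper this comes from the explicit remainders: the ratios $\grad^k u^N/\grad^k\g^{(\alpha)}$ are exactly the explicit sequences $\eta^N_k$, so each $R^{(\alpha)}_k(u^N)\le C_\alpha/\log N$.

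\textbf{How the paper closes the gap, and a fix within your framework.} The paper's missing idea is a reduction to the exponent $\{\alpha\}\in(0,1]$, where the off-diagonal entries are $\le0$ and the ground-state (Picone) identity of Theorem~\ref{thm:frac_hardy_eq} applies. For $\alpha>1$ it combines three ingredients:
\begin{enumerate}[(i)]
\item the identity $\sum\overline{u_n}(-\lap)^\alpha u_n=\sum\overline{\grad u_n}(-\lap)^{\alpha-1}\grad u_n$;
\item the weighted identity of Lemma~\ref{lem:weighted_hardy} with $V=(-\lap)^{\alpha-1}\grad\g/\grad\g$;
\item the closure property $\grad\g^{(\alpha)}=(\alpha-\tfrac12)\g^{(\alpha-1)}$, which makes every intermediate weight equal to $\rho^{(\alpha-k)}>0$.
\end{enumerate}
Iterating this yields $\ceil{\alpha}$ explicit non-negative remainders.

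Alternatively, you can rescue your own factorisation by applying Cauchy--Schwarz to the \emph{inverse} of $T(a)$. The formal inverse is the lower triangular Toeplitz matrix with entries $d_k=(\alpha)_k/k!>0$, the coefficients of $(1-z)^{-\alpha}$. Set $h=T(a)g$, so that $h_n=\frac{\Gamma(\alpha+1/2)}{\sqrt\pi}\frac{\Gamma(n-1/2)}{\Gamma(n)}>0$. Comparing generating functions, $(1-z)^{-1/2}$ for $h$ and $(1-z)^{-\alpha-1/2}$ for $g$, gives $T(a)^{-1}h=g$. Gauss's theorem gives $\sum_{m\ge n}d_{m-n}\rho^{(\alpha)}_mg_m=h_n$. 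Now write $w=T(a)u$ and $u=T(a)^{-1}w$. Row-wise Cauchy--Schwarz followed by Tonelli gives $\sum\rho^{(\alpha)}|u|^2\le\sum|w|^2$, with Lagrange remainder $\sum_m\frac{\rho^{(\alpha)}_m}{2}\sum_{n,n'\le m}d_{m-n}d_{m-n'}h_nh_{n'}\bigl|w_n/h_n-w_{n'}/h_{n'}\bigr|^2$. This is a one-step proof of the inequality for all $\alpha>0$, and it makes non-attainability straightforward. The cost is in the optimality part: at $u^N=g\phi_N$ the ratio $w^N/h$ is no longer just the cutoff. You would have to control the nonlocal commutator $T(a)(g\phi_N)/h-\phi_N$, which the paper's local, iterated structure avoids.
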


\begin{remark}
    If $\alpha=\ell\in\N$ is an integer, the formula simplifies to
    \[
        \rho^{(\ell)}_n
          = \frac{\Gamma^2(\ell+1/2)}{\pi}\frac{1}{(n-1/2)n(n+1/2)\cdots(n+\ell-1)}
          \quad\text{ for }n\in\N,
    \]
    which reproduces the results of \eqref{eq:stamp_wacl_opt_disc_birm}. Moreover, it is noteworthy that if $\alpha=\ell+1/2\in\N_0+1/2$ is a half-integer, the expression also simplifies, and we obtain the inequality
    \[
        \sum_{n=1}^\infty\overline{u_n}(-\lap)^{\ell+1/2} u_n
          \ge \frac{(\ell!)^2}{\pi} \sum_{n=1}^\infty \frac{\abs{u_n}^2}{(n-1/2)n(n+1/2)\cdots(n+\ell-1/2)}
    \]
    for any $\ell\in\N_0$ and $u\in C_0(\N)$.
\end{remark}

With the aid of the Stirling formula, one deduces the asymptotic behaviour
\begin{equation}
    \label{eq:rho_a_asy}
    \rho^{(\alpha)}_n 
      =\frac{\Gamma^2(\alpha+1/2)}{\pi} \frac{1}{n^{2\alpha}}+\mathcal{O}\!\left(\frac{1}{n^{2\alpha+1}}\right),
      \quad\text{ for }n\to\infty.
\end{equation}
This motivates us to establish a lower bound for the optimal weight using only the leading term from~\eqref{eq:rho_a_asy}. This is our second main result: a discrete analogue of inequality \eqref{eq:cont_frac_hardy_on_R_+}.

\begin{theorem}[classical fractional Hardy inequality]
    \label{thm:clas_frac_hardy}
    Let $\alpha>0$. Then for any $u\in\ell^{2}(\N)$, with $u_n=0$ for all $n<\ceil{\alpha}$, the inequality
    \begin{equation}
        \label{eq:thm:clas_frac_hardy}
        \sum_{n=\ceil{\alpha}}^\infty \overline{u_n} (-\lap)^\alpha u_n
          \ge \frac{\Gamma^2(\alpha+1/2)}{\pi} \sum_{n=\ceil{\alpha}}^\infty \frac{\abs{u_n}^2}{n^{2\alpha}}
    \end{equation}
    holds and the constant $\Gamma^{2}(\alpha+1/2)/\pi$ is sharp. Moreover, the inequality \eqref{eq:thm:clas_frac_hardy} is strict unless $u\equiv0$.
\end{theorem}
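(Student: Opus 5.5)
The plan is to deduce Theorem~\ref{thm:clas_frac_hardy} from Theorem~\ref{thm:opt_frac_hardy} by a pointwise comparison of weights. The inequality \eqref{eq:thm:clas_frac_hardy} then follows from the Hardy inequality with the weight $\rho^{(\alpha)}$, provided that for every $n\ge\ceil{\alpha}$ we have
\[
 \frac{4^{\alpha}\Gamma(2n-1)}{\Gamma(2n+2\alpha-1)} \ge \frac{1}{n^{2\alpha}},
 \qquad\text{i.e.}\qquad
 f(n)\coloneq \frac{\Gamma(2n+2\alpha-1)}{\Gamma(2n-1)\,(2n)^{2\alpha}}\le 1 .
\]
Since $u_n=0$ for $n<\ceil{\alpha}$, only these indices enter.

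I would prove this bound via a convexity argument for $\log\Gamma$. Set $x=2n-1$ and $s=2\alpha$. We need $\Gamma(x+s)/\Gamma(x)\le (x+1)^{s}$ for $x\ge 2\ceil{\alpha}-1$. Write $s=m+t$ with $m=\floor{s}$ and $t\in[0,1)$. Then
\[
 \frac{\Gamma(x+s)}{\Gamma(x)}=x(x+1)\cdots(x+m-1)\,\frac{\Gamma(x+m+t)}{\Gamma(x+m)} .
\]
By Wendel/Gautschi, the last factor is at most $(x+m)^{t}$. Pairing the factors $x+j$ and $x+m-1-j$ by AM--GM gives $x\cdots(x+m-1)\le (x+\tfrac{m-1}{2})^{m}$. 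One then has to compare the result with $(x+1)^{m+t}$. This works directly when $m\le 2$. For larger $m$, and more generally as a cleaner route, I would instead show that $f$ is nondecreasing in $n$ with $\lim_{n\to\infty} f(n)=1$; the limit holds by Stirling, consistent with \eqref{eq:rho_a_asy}. Monotonicity reduces to showing $f(n+1)/f(n)\ge1$, i.e.
\[
 (2n+2\alpha-1)(2n+2\alpha)\ge (2n-1)(2n)\left(\tfrac{n+1}{n}\right)^{2\alpha}.
\]
Equivalently, $g(\alpha)=\log\frac{(2n+2\alpha-1)(2n+2\alpha)}{(2n-1)2n}-2\alpha\log(1+1/n)$ must be nonnegative. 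This function is concave in $\alpha$ and vanishes at $\alpha=0$. Hence $g\ge0$ on $[0,\alpha^*]$, and the relevant range $\alpha\le n$ (coming from $n\ge\ceil{\alpha}$) must be checked to lie inside that interval, e.g.\ by evaluating $g$ at $\alpha=n$. This restriction is exactly why the condition $n\ge\ceil{\alpha}$ appears. \textbf{The main obstacle} is verifying this elementary but delicate gamma inequality uniformly in $\alpha$ on the allowed range of $n$.

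Strictness is immediate: equality in \eqref{eq:thm:clas_frac_hardy} would force equality in the $\rho^{(\alpha)}$-Hardy inequality, which is impossible by non-attainability in Theorem~\ref{thm:opt_frac_hardy}. This uses the fact that the weight comparison holds for each relevant $n$ and that the chain $\ge\rho^{(\alpha)}\ge$ classical weight makes the first inequality an equality.

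Sharpness follows from optimality near infinity. By Remark~\ref{rem:optimality}\eqref{it:rem:optimality:3}, the classical weight $\Gamma^2(\alpha+1/2)\pi^{-1}n^{-2\alpha}$ is asymptotically equivalent to $\rho^{(\alpha)}$ by \eqref{eq:rho_a_asy}. Hence the infimum \eqref{eq:rem:opt_near_inf} over sequences supported in $[M,\infty)$, with $M\ge\ceil{\alpha}$, equals $1$ for it as well. Therefore no larger constant is admissible.
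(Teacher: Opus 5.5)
\textbf{There is a gap: the pointwise comparison you reduce to is false for every $\alpha>3/2$.}

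You need $f(n)\le 1$ for $n\ge\ceil{\alpha}$, i.e.\ $\rho^{(\alpha)}_n\ge\frac{\Gamma^2(\alpha+1/2)}{\pi}n^{-2\alpha}$ on that range. Expanding the Gamma ratio gives $f(n)=1+\frac{\alpha(2\alpha-3)}{2n}+\mathcal{O}(n^{-2})$. So for $\alpha>3/2$, $f$ eventually decreases to $1$ from above rather than increasing to it. Correspondingly, your function satisfies $g(\alpha)=\frac{\alpha(3-2\alpha)}{2n^2}+\mathcal{O}(n^{-3})<0$ for large $n$. The monotonicity step therefore fails, and no check at $\alpha=n$ can rescue it.

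Two explicit counterexamples:
\begin{itemize}
\item For $\alpha=2$, $f(n)=\frac{(2n-1)2n(2n+1)(2n+2)}{(2n)^4}=1+\frac1n-\frac{1}{4n^2}-\frac{1}{4n^3}>1$ for every $n\ge1$.
\item For $\alpha=7/4$, already at $n=2=\ceil{\alpha}$ one gets $f(2)=\Gamma(13/2)/(2\cdot 4^{7/2})\approx 1.12$.
\end{itemize}
This is exactly the content of the paper's remark following the theorem. Because of the criticality of $\rho^{(\alpha)}$, no choice of starting index makes a direct comparison with $\rho^{(\alpha)}_n$ work. You have misread the role of the hypothesis $u_n=0$ for $n<\ceil{\alpha}$: it is not there to restrict the range of $n$.

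\textbf{The missing idea is the Toeplitz structure of $(-\lap)^\alpha$.} Since $(-\lap)^\alpha_{m+k,n+k}=(-\lap)^\alpha_{m,n}$, the quadratic form is invariant under the forward shift. For $u$ vanishing below $\ceil{\alpha}$, apply Theorem~\ref{thm:opt_frac_hardy} to $v_n\coloneq u_{n+\ceil{\alpha}-1}$. This produces the shifted weight $\rho^{(\alpha)}_{n-\ceil{\alpha}+1}$ at each index $n\ge\ceil{\alpha}$.

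The comparison you actually need is then
\[
\rho^{(\alpha)}_n\ge\frac{\Gamma^2(\alpha+1/2)}{\pi}(n+\ceil{\alpha}-1)^{-2\alpha}\quad\text{for all } n\ge1,
\]
equivalently $\Gamma(2n-1+2\alpha)/\Gamma(2n-1)\le(2n+2\ceil{\alpha}-2)^{2\alpha}$. This holds for every $\alpha>0$:
\begin{itemize}
\item Split off $\ceil{2\alpha}-1$ linear factors, each at most $2n-2+\ceil{2\alpha}$.
\item Bound the remaining ratio by Gautschi's (Wendel's) inequality: $\Gamma(y+\{2\alpha\})/\Gamma(y)\le y^{\{2\alpha\}}$ with $y=2n-2+\ceil{2\alpha}$.
\item Finish with $\ceil{2\alpha}\le2\ceil{\alpha}$.
\end{itemize}
Your Wendel/Gautschi tools are the right ones; they just have to be applied to the shifted weight.

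With this change, your strictness argument (non-attainability applied to $v$) and your sharpness argument (asymptotic equivalence plus optimality near infinity) go through as you describe.
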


\begin{remark}
    For $\alpha\in\N$, the inequality~\eqref{eq:thm:clas_frac_hardy} coincides with the discrete Birman inequality proved in \cite[Thm.~1.1]{huang-ye_24-hardy}; see also~\cite{bir_61} for the original continuous Birman inequality. 
\end{remark}

\begin{remark}
    The support restriction $u_n=0$ for all $n<\ceil{\alpha}$ and the shift in the summation index in \eqref{eq:thm:clas_frac_hardy} is necessary, since the inequality
    \[
        \rho^{(\alpha)}_n\ge\frac{\Gamma^2(\alpha+1/2)}{\pi}\frac{1}{n^{2\alpha}}
    \]
    fails in general. In fact, the reverse inequality holds for all $n\in\N$ if $\alpha\ge2$,
    precluding such a lower bound due to the criticality of $\rho^{(\alpha)}$.
\end{remark}

\section{Preliminaries}
\label{sec:prelim}

\subsection{Notation}

We use the notation $\Z$, $\N$, and $\N_0$ for the sets of integers, positive integers, and non-negative integers, respectively. 

We denote by $C(\Z)$ the space of complex sequences indexed by $\Z$, and by $C_0(\Z)$ its subspace of compactly (finitely) supported sequences. Similar notation is used for spaces $C(\N)$ and $C_0(\N)$ with the following \emph{zero-extension convention}. Whenever convenient, we embed $C(\N)$ into $C(\Z)$ extending the semi-infinite sequences indexed by $\N$ by zeros to non-positive indices, and similarly for spaces $C_0(\N)$ and $C_0(\Z)$.

The Hilbert spaces $\ell^2(\N)$ and $\ell^2(\Z)$ of square-summable sequences indexed by $\N$ and $\Z$, respectively, are considered to be endowed with the Euclidean inner products, anti-linear in the first argument.

Further, we will make use of discrete differential operators acting on $C(\Z)$: the \textit{discrete gradient} and \textit{discrete divergence} defined as
\begin{equation*} 
    \grad u_n := u_n - u_{n-1}
    \quad\text{ and }\quad
    \ddiv u_n := u_{n+1} - u_n
\end{equation*}
for all $n\in\Z$. Their composition defines the \textit{discrete Laplacian} $\lap_{\Z}\coloneq\ddiv\grad=\grad\ddiv$ on $\Z$, i.e.
\[
    \lap_{\Z} u_n=u_{n-1}-2u_n+u_{n+1}
    \quad\text{ for }n\in\Z.
\]

If not stated otherwise, we always assume that $\alpha>0$. We denote by $\ceil{\alpha}$ ($\floor{\alpha}$) the smallest (largest) integer greater (smaller) than or equal to $\alpha$, and define the \textit{fractional part} of $\alpha$ as
\begin{equation*}
    \{\alpha\}\coloneq \alpha - \ceil{\alpha} + 1\in(0,1].
\end{equation*}
Here, we adopt the convention where integers are mapped to 1 instead of 0.

Finally, throughout the text, we use $C_{a}$ or $C_{a,b}$ to denote generic positive constants depending only on parameters $a$ or $a,b$, respectively. The exact value of such a constant may change from line to line within derivations.

\subsection{Positive powers of the discrete Laplacian}

Recall that the discrete Laplace operator $-\lap_{\Z}$ is a self-adjoint Laurent operator on $\ell^{2}(\Z)$, which is diagonalised by the discrete Fourier transform. Concretely, we have
\[
\mathcal{F}(-\lap_{\Z})\mathcal{F}^{-1}=M_{2-2\cos\xi},
\]
where
\[
 \mathcal{F}u(\xi)\coloneq\frac{1}{\sqrt{2\pi}}\sum_{n\in\Z}\ee^{-\ii n\xi}\,u_{n} \quad\mbox{ in } L^{2}(-\pi,\pi),
\]
and $M_{2-2\cos\xi}$ denotes the multiplication operator by the function $2-2\cos\xi$ in $L^{2}(-\pi,\pi)$. Consequently, one defines the positive power of the discrete Laplace operator on $\ell^{2}(\Z)$ by
\begin{equation}
    \label{eq:def:disc_frac_lap}
    (-\lap_{\Z})^\alpha \coloneq \mathcal{F}^{-1} M_{(2 - 2 \cos \xi)^\alpha} \mathcal{F}
\end{equation}
for all $\alpha>0$. It follows that the spectrum of $(-\lap_{\Z})^\alpha$ is absolutely continuous, filling the interval $[0,4^{\alpha}]$, in particular $\|(-\lap_{\Z})^{\alpha}\|=4^{\alpha}$. Equivalent definitions can be found in~\cite{cia-ron_18, kel-nit_23}.

First, we compute the matrix entries of $(-\lap_{\Z})^{\alpha}$. We will make use of the generalised binomial coefficients,
\begin{equation}
    \label{eq:def_binom}
    \binom{a}{b} \!\coloneq \frac{\Gamma(a + 1)}{\Gamma(b + 1)\Gamma(a - b + 1)}
    \quad\text{ for } a>-1 \text{ and }b\in\R.
\end{equation}
(Recall that the reciprocal Gamma function is an entire function vanishing at non-positive integers.)

\begin{lemma}
    \label{lem:matrix_elements}
    Let $\alpha>0$. For all $m,n\in\Z$, we have
 	\[
        (-\lap_{\Z})^\alpha_{m,n} = (-1)^{m+n} \binom{2\alpha}{\alpha + m - n}.
	\]
\end{lemma}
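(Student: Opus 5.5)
The plan is to compute the Fourier coefficients of the symbol directly. Since $(-\lap_{\Z})^\alpha$ is the Laurent operator $\mathcal{F}^{-1}M_{(2-2\cos\xi)^\alpha}\mathcal{F}$, its matrix entry $(-\lap_\Z)^\alpha_{m,n}$ depends only on $k\coloneq m-n$ and equals
\[
 c_k=\frac{1}{2\pi}\int_{-\pi}^{\pi}(2-2\cos\xi)^\alpha\,\ee^{\ii k\xi}\,\dd\xi .
\]
This follows by applying the definition to the standard basis vector $e_n$, whose transform is $\ee^{-\ii n\xi}/\sqrt{2\pi}$, and pairing with $e_m$. So it suffices to show $c_k=(-1)^k\binom{2\alpha}{\alpha+k}$. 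Since $(-1)^{m+n}=(-1)^{m-n}$, this is exactly the claim.

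First rewrite the symbol as $2-2\cos\xi=4\sin^2(\xi/2)$, so $(2-2\cos\xi)^\alpha=2^{2\alpha}|\sin(\xi/2)|^{2\alpha}$. The integrand is even, and the substitution $\xi=2\theta$ gives
\[
 c_k=\frac{2^{2\alpha}}{\pi}\int_0^{\pi}\sin^{2\alpha}\theta\,\cos(2k\theta)\,\dd\theta .
\]
Next apply the classical formula (Gradshteyn--Ryzhik 3.631.8, valid for $\nu>-1$)
\[
 \int_0^\pi\sin^{\nu}\theta\cos(a\theta)\,\dd\theta=\frac{\pi\cos(a\pi/2)\,\Gamma(\nu+1)}{2^{\nu}\,\Gamma\!\left(1+\frac{\nu+a}{2}\right)\Gamma\!\left(1+\frac{\nu-a}{2}\right)}
\]
with $\nu=2\alpha$ and $a=2k$. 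Since $\cos(k\pi)=(-1)^k$, this yields
\[
 c_k=(-1)^k\frac{\Gamma(2\alpha+1)}{\Gamma(\alpha+k+1)\Gamma(\alpha-k+1)}=(-1)^k\binom{2\alpha}{\alpha+k},
\]
in agreement with the definition~\eqref{eq:def_binom}. Here the reciprocal Gamma convention handles integer $\alpha$ with $|k|>\alpha$, where the entries vanish.

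The only real obstacle is justifying the integral formula, if one prefers not to cite it. A self-contained route avoids it: write $(2-2\cos\xi)^\alpha=(1-\ee^{\ii\xi})^\alpha(1-\ee^{-\ii\xi})^\alpha$ using principal branches, which is legitimate since $\Re(1-\ee^{\pm\ii\xi})\ge0$ and the two factors are complex conjugates. Expand each factor by the binomial series $\sum_j\binom{\alpha}{j}(-1)^j\ee^{\pm\ij\xi}$. These series converge absolutely and uniformly on the circle for $\alpha>0$, since $\binom{\alpha}{j}=O(j^{-\alpha-1})$. Multiplying the two series and collecting the coefficient of $\ee^{-\ii k\xi}$ gives
\[
 c_k=(-1)^k\sum_j\binom{\alpha}{j}\binom{\alpha}{j+k}.
\]
This sum equals $(-1)^k\binom{2\alpha}{\alpha+k}$ by Gauss's ${}_2F_1(1)$ summation, which is the Chu--Vandermonde identity for non-integer parameters; its convergence condition holds because $2\alpha+1>0$. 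Either route completes the proof.
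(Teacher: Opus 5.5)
Your main route is the paper's proof. You write the entry as the Fourier coefficient of $(2-2\cos\xi)^\alpha$, use $4\sin^2(\xi/2)$, and evaluate with Gradshteyn--Ryzhik 3.631.8, taking the $\int_0^\pi$ form directly where the paper uses a half-range $\int_0^{\pi/2}$ variant; your constants check out. Your alternative via binomial series and Gauss's ${}_2F_1(1)$ summation is also correct, though not needed.
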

\begin{proof}
    By definition~\eqref{eq:def:disc_frac_lap} and unitarity of the discrete Fourier transform $\mathcal{F}$, we have
    \[
        (-\lap_{\Z})^\alpha_{m,n}=\frac{1}{2\pi}\!\int_{-\pi}^\pi \ee^{\ii(n-m)x}\,(2-2\cos x)^\alpha\dd x.
    \]
    Since $2-2\cos{x}=4\sin^2{(x/2)}$ is even, the integral representation further simplifies to
    \[
        (-\lap_{\Z})^\alpha_{m,n} = 2\frac{4^\alpha}{\pi} \int_0^{\pi/2} \sin^{2\alpha}(t) \cos(2(m-n)t) \dd t.
    \]
The last integral may be computed with the aid of a slight modification of the identity~\cite[\S~3.631, Eq.~(8)]{grad-ryz_05-table}, which reads
    \[
        \int_0^{\pi/2} \sin^{\nu-1}(t) \cos(2kt) \dd t
          = \frac{(-1)^{k}\,\pi\,\Gamma(\nu)}{2^\nu\,\Gamma\!\paren{\frac{\nu+1}{2}+k}\Gamma\!\paren{\frac{\nu+1}{2}-k}}
          \quad\text{ for }k\in\Z \text{ and }\Re{\nu}>0.
    \]
    Altogether, in view of definition~\eqref{eq:def_binom}, we obtain the desired result
    \[
        (-\lap_{\Z})^\alpha_{m,n} 
          = (-1)^{m+n}\frac{\Gamma(2\alpha+1)}{\Gamma(\alpha+1+m-n)\Gamma(\alpha+1-m+n)}
          = (-1)^{m+n} \binom{2\alpha}{\alpha+m-n}.
    \qedhere
    \]
\end{proof}

\begin{remark}
   Since $-\lap_{\Z}$ is a self-adjoint Laurent operator, its positive power $(-\lap_{\Z})^{\alpha}$ must possess the same properties. This can be checked directly by observing that the matrix element formula of Lemma~\ref{lem:matrix_elements} depends only on $|m-n|$. Note also that if $\alpha\in\N$, the Laurent matrix of $(-\lap_{\Z})^{\alpha}$ is banded.
\end{remark}

Let $P_{+}:\ell^{2}(\N)\to\ell^{2}(\Z)$ be the embedding map of $\ell^{2}(\N)\hookrightarrow\ell^{2}(\Z)$, which extends a semi-infinite sequence of $\ell^{2}(\N)$ by zeros to non-positive indices. Then we define the \emph{fractional discrete Laplacian on the half-line} $\N$ as the compression
\begin{equation}
 (-\lap)^{\alpha}:=P_{+}^{*}(-\lap_{\Z})^{\alpha}P_{+}.
\label{eq:def-frac-lap_abstract}
\end{equation}
It follows, using Lemma~\ref{lem:matrix_elements}, that $(-\lap)^{\alpha}$ is a bounded Toeplitz operator on $\ell^{2}(\N)$, whose matrix elements can be expressed in several equivalent forms:
\begin{align}
 (-\lap)^\alpha_{m,n} = (-1)^{m+n}\, \binom{2\alpha}{\alpha + m - n}
 &=\frac{(-1)^{m+n}\,\Gamma(2\alpha+1)}{\Gamma(\alpha+1+m-n)\Gamma(\alpha+1-m+n)} \label{eq:def-frac-lap_matrix1}\\
 &=-\frac{\Gamma(2\alpha+1)\sin{(\pi\alpha)}}{\pi}\,\frac{\Gamma(|m-n|-\alpha)}{\Gamma(|m-n|+\alpha+1)} \label{eq:def-frac-lap_matrix2}
\end{align}
for all $m,n\in\N$, where \eqref{eq:def-frac-lap_matrix2} follows from the Euler reflection formula \cite[Eq.~(5.5.3)]{DLMF}
\begin{equation}\label{eq:euler_reflection}
        \Gamma(z)\Gamma(1-z)=\frac{\pi}{\sin(\pi z)},
\end{equation}
and is to be interpreted as the respective limit if $\alpha\in\N$.

\begin{remark}
The operator $(-\lap)^{\alpha}$ should not be confused with the operator $(-\lap_{\N})^{\alpha}$ studied in~\cite{gerhat-krejcirik-stampach_23_criticality}. Matrix entries of the latter read
    \begin{equation}
        \label{eq:mat_el_frac_lap_on_N}
        (-\lap_\N)^\alpha_{m,n}=(-1)^{m+n} \left[ \binom{2\alpha}{\alpha+m-n} - \binom{2\alpha}{\alpha+m+n} \right]
        \quad\text{ for }m,n\in\N.
    \end{equation}
The matrix of $(-\lap_\N)^\alpha$ is not Toeplitz and differs from the matrix of $(-\lap)^\alpha$ by a Hankel matrix. Due to this difference, the subcritical range for $(-\lap_\N)^\alpha$ is restricted to exponents $\alpha\in(0,3/2)$, while $(-\lap)^\alpha$ is subcritical for all $\alpha>0$. Moreover, the Toeplitz structure of $(-\lap)^\alpha$ is essential for deriving fractional Hardy inequalities for $\alpha>1$.
\end{remark}

The matrix multiplication action of $(-\lap)^{\alpha}$ on a semi-infinite column vector $v$, namely 
\begin{equation}
 (-\lap)^{\alpha}v_{n}=\sum_{m=1}^{\infty}(-\lap)^{\alpha}_{n,m}v_{m}, \quad n\in\N,
\label{eq:def-frac-lap_extended}
\end{equation}
makes sense on sequence spaces larger than $\ell^{2}(\N)$. Requiring absolute convergence of the series, we introduce the domain
\begin{equation}
    \label{eq:def:dom_a}
    \mathcal{D}_\alpha 
      \coloneq \Bigl\{ v\in C(\N) \Bigm\vert \sum_{m=1}^\infty \bigl|(-\lap)^\alpha_{n,m} v_m\bigr| < \infty,\;\forall n\in\N \Bigr\}.
\end{equation}
Observe that the asymptotic expansion for the ratio of Gamma functions~\cite[Eq.~(5.11.13)]{DLMF}
\begin{equation}
    \label{eq:Gamma_ratio_exp}
    \frac{\Gamma(x+a)}{\Gamma(x+b)}=x^{a-b}\left[1+\mathcal{O}\!\left(\frac{1}{x}\right)\!\right],
    \quad\text{ as } x\to\infty,
    \text{ for } a,b\in\R,
\end{equation}
in conjunction with the formula~\eqref{eq:def-frac-lap_matrix2}, yields the estimate
\begin{equation}
    \label{eq:estimate_frac_lap}
    \abs{(-\lap)^\alpha_{m,n}}
    \le \frac{C_\alpha}{\abs{m-n}^{2\alpha+1}}
    \quad \text{ for } m\neq n.
\end{equation}
In particular, this implies that $\ell^2(\N)\subset\ell^\infty(\N)\subset \mathcal{D}_\alpha$.

\begin{remark}
    In~\cite{das-fernandez_26-frac_hardy}, the authors exploit the fact that the fractional Laplacian $(-\lap_\N)^\alpha$, determined by~\eqref{eq:mat_el_frac_lap_on_N}, can be interpreted as a formal Schrödinger operator on a suitable nonlocal graph for $\alpha \in (0,1]$. This observation allows them to apply the abstract framework of optimal Hardy inequalities on graphs developed in~\cite{keller-pinchover-pogorzelski_18_hardy_on_graphs}. For completeness, we demonstrate that the operator $(-\lap)^\alpha$ likewise induces a formal Schrödinger operator for $\alpha\in(0,1]$ in the following sense. A \textit{graph} over a discrete set $X$ is a symmetric function $e:X\times X\to[0,\infty)$, such that
    \[
        e(x,x)=0
        \quad\text{ and }\quad
        \sum_{y\in X} e(x,y)<\infty
        \quad\text{ for all }x\in X.
    \]
    If $e(x,y)>0$, we say that $x,y\in X$ are \textit{connected by an edge} and write $x\sim y$. The \textit{formal Laplacian}~$L$, acting on the space
    \[
        \mathcal{D}(X)\coloneq\Bigl\{ u:X\to\C \Bigm\vert \sum_{y\sim x}e(x,y)\abs{u(y)}<\infty, \forall x\in X\Bigr\},
    \]
    is defined by the formula
    \[
        L u(x)
          \coloneq\sum_{y\sim x}e(x,y)\bigl(u(x)-u(y)\bigr)
        \quad\text{ for all } x\in X.
    \]
    Finally, the \textit{formal Schrödinger operator $H$ associated with a potential} $q:X\to\R$ is defined on $\mathcal{D}(X)$ by $H\coloneq L+q$, i.e. $Hu(x)\coloneq Lu(x)+q(x)u(x)$ for all $x\in X$ and $u\in\mathcal{D}(X)$.
    
    We now set $X \coloneq \N$, define $e(m,n) \coloneq -(-\lap)_{m,n}^\alpha$ for $m \neq n$, and $e(n,n) \coloneq 0$ for $n \in \N$. The resulting graph is well-defined for every $\alpha \in (0,1]$, since $e$ is symmetric and non-negative, as one easily verifies by inspection of formula~\eqref{eq:def-frac-lap_matrix1}. Moreover, we introduce the potential
    \begin{equation}
        \label{eq:potential_expl}
        q_\alpha(n) 
          \coloneq \sum_{m\in\N} (-\lap)_{m,n}^\alpha
          =(-1)^{n+1}\binom{2\alpha-1}{\alpha-n},
        \quad \text{ for } n \in \N
        \text{ and }\alpha>0.
    \end{equation}
    The explicit form can be computed by invoking the formula \eqref{eq:def-frac-lap_matrix1} and rewriting the sum as a \textit{telescoping sum} using Pascal's rule for binomials. Consequently, we may write 
    \begin{equation}
        \label{eq:form_SO}
        (-\lap)^\alpha u(n)
          \coloneq - \sum_{m=1}^\infty (-\lap)_{n,m}^\alpha \bigl( u(n) - u(m) \bigr)
          + u(n) \sum_{m=1}^\infty (-\lap)_{n,m}^\alpha
          \equiv L u(n) + q_\alpha(n)u(n),
    \end{equation}
    and thus the operator $(-\lap)^\alpha$ indeed induces a formal Schrödinger operator for all $\alpha\in(0,1]$. 
    
	Although this fact will not be used explicitly, it constitutes the main idea behind the \textit{ground state transform} (cf.~\cite[Proposition 2.4]{keller-pinchover-pogorzelski_18_hardy_on_graphs}), given by the Hardy-type identity in Theorem~\ref{thm:frac_hardy_eq} below. In contrast, if $\alpha>1$, neither $(-\lap)^\alpha$ nor $(-\lap_\N)^\alpha$ can be viewed as such operators, since $e(m,n)$ is no longer non-negative for all $m,n\in\N$. This is the main obstacle which prevents the method used in \cite{das-fernandez_26-frac_hardy} from covering the case $\alpha\in(1,3/2)$. As we do not rely on the theory requiring non-negative kernels, we overcome this lack of non-negativity by demonstrating Theorem~\ref{thm:opt_frac_hardy} directly.
\end{remark}

\section{Abstract construction of fractional Hardy weights}
\label{sec:abstract_results}

In this section, we explain how to derive abstract fractional Hardy inequalities given a \textit{parameter sequence} $\g$ that satisfies certain sufficient conditions. The key idea is to establish fractional Hardy identities by explicitly characterising the remainder terms appearing in the corresponding inequalities. Furthermore, both the weight and the remainders will be represented explicitly in terms of $\g$; a feature that will play a crucial role in verifying the optimality of the corresponding weights.

For this purpose, we begin by formulating the following auxiliary lemma, which, for $\alpha\in(0,1]$, identifies the \textit{quadratic form} associated with the formal Schrödinger operator~\eqref{eq:form_SO} corresponding to the fractional Laplacian $(-\lap)^\alpha$.

\begin{lemma}
    \label{lem:lap_quad_formula}
    Let $\alpha>0$. Then for any $u\in C_0(\N)$, we have the identity
    \begin{equation*}
        \sum_{n=1}^\infty  \overline{u_n}(-\lap)^\alpha u_n
          =-\frac{1}{2}\sum_{m,n\in\N}(-\lap)^\alpha_{n,m}\abs{u_m-u_n}^2 - \sum_{n=1}^\infty  (-1)^n \binom{2\alpha-1}{\alpha-n} \abs{u_n}^2.
    \end{equation*}
\end{lemma}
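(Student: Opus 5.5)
The identity is a standard symmetrisation of the quadratic form, followed by the explicit evaluation of the row sums $q_\alpha(n)$ from \eqref{eq:potential_expl}. Write $a_{n,m}\coloneq(-\lap)^\alpha_{n,m}$. This matrix is real and symmetric by \eqref{eq:def-frac-lap_matrix1}. Since $u\in C_0(\N)$ and the rows of $a$ are absolutely summable by \eqref{eq:estimate_frac_lap}, every double sum below converges absolutely. This justifies all rearrangements.

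\textbf{Step 1 (symmetrisation).} Expand
\[
|u_m-u_n|^2=|u_m|^2+|u_n|^2-\overline{u_n}u_m-\overline{u_m}u_n .
\]
Multiply by $a_{n,m}$ and sum over all $m,n\in\N$. The two cross terms give, by symmetry of $a$,
\[
-2\Re\sum_{n,m}\overline{u_n}a_{n,m}u_m=-2\sum_n\overline{u_n}(-\lap)^\alpha u_n .
\]
This quantity is real because the form is Hermitian. The two diagonal-type terms give $2\sum_n|u_n|^2 q_\alpha(n)$, where $q_\alpha(n)=\sum_{m\in\N}a_{n,m}$. Here we use the row–column symmetry, and the absolute summability in $m$ lets us sum over $m$ first even though $u$ is supported in finitely many $n$. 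Dividing by $-2$ yields
\[
\sum_n\overline{u_n}(-\lap)^\alpha u_n=-\tfrac12\sum_{m,n}a_{n,m}|u_m-u_n|^2+\sum_n q_\alpha(n)|u_n|^2 .
\]
The diagonal terms $m=n$ contribute nothing to the first sum, so no care is needed there.

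\textbf{Step 2 (row sums).} It remains to prove
\[
q_\alpha(n)=(-1)^{n+1}\binom{2\alpha-1}{\alpha-n}.
\]
Substitute $k=m-n$, so that
\[
q_\alpha(n)=\sum_{k=1-n}^{\infty}(-1)^k\binom{2\alpha}{\alpha+k}.
\]
Pascal's rule
\[
\binom{2\alpha}{\alpha+k}=\binom{2\alpha-1}{\alpha+k}+\binom{2\alpha-1}{\alpha+k-1}
\]
holds for generalised binomials via $\Gamma(z+1)=z\Gamma(z)$. With $b_k\coloneq(-1)^k\binom{2\alpha-1}{\alpha+k}$, this makes the summand equal to $b_k-b_{k-1}$. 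The partial sum from $k=1-n$ to $K$ therefore telescopes to $b_K-b_{-n}$.

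\textbf{Step 3 (the boundary term at infinity).} This is the main obstacle. We need $b_K\to0$ as $K\to\infty$, and the binomial $\binom{2\alpha-1}{\cdot}$ is only covered by \eqref{eq:def_binom} when $2\alpha-1>-1$, which holds since $\alpha>0$. By the reflection formula, as in \eqref{eq:def-frac-lap_matrix2},
\[
|b_K|\le C_\alpha K^{-2\alpha},
\]
which tends to $0$; for integer $\alpha$, $b_K$ simply vanishes for large $K$. This also confirms that the series converges, consistent with \eqref{eq:estimate_frac_lap}. Hence
\[
q_\alpha(n)=-b_{-n}=(-1)^{n+1}\binom{2\alpha-1}{\alpha-n},
\]
and substituting into Step 1 gives exactly the claimed formula, with the sign $-\sum_n(-1)^n\binom{2\alpha-1}{\alpha-n}|u_n|^2$.
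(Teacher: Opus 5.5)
Your proposal is correct and follows essentially the same route as the paper: symmetrise the quadratic form using the symmetry of the real Toeplitz matrix, then substitute the row sums $q_\alpha(n)$ from \eqref{eq:potential_expl}. You expand $\abs{u_m-u_n}^2$ where the paper splits $\overline{u_n}u_m$, which is only cosmetic, and you also supply the telescoping Pascal-rule derivation of \eqref{eq:potential_expl}, including the vanishing boundary term $b_K\to0$, which the paper merely sketches in the remark preceding the lemma.
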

\begin{proof}
    The quadratic form on the left-hand side can be rewritten as
    \[
        \sum_{n=1}^\infty  \overline{u_n}(-\lap)^\alpha u_n
          = \sum_{n=1}^\infty  \!\paren{\sum_{m=1}^\infty(-\lap)^\alpha_{n,m}}\!\abs{u_n}^2 - \! \sum_{m,n\in\N}(-\lap)^\alpha_{n,m}\overline{u_n}(u_n-u_m).
    \]
    To the first term on the right, we apply identity~\eqref{eq:potential_expl}. Moreover, the second term is rewritten as
    \[
            \sum_{m,n\in\N}(-\lap)^\alpha_{n,m}\overline{u_n}(u_n-u_m)
              =\sum_{m,n\in\N}(-\lap)^\alpha_{n,m}\overline{u_m}(u_m-u_n)
              =\frac{1}{2}\sum_{m,n\in\N}(-\lap)^\alpha_{n,m}\abs{u_m-u_n}^2
    \]
    by the symmetry of the operator $(-\lap)^\alpha$. The claim follows.
\end{proof}

We are now in position to derive fractional Hardy identities for any $\alpha>0$, analogous to the \textit{ground state representation}~\cite[Proposition 3.4]{das-fernandez_26-frac_hardy} of the operator $(-\lap_\N)^\alpha$ for $\alpha\in(0,1]$.

\begin{theorem}
    \label{thm:frac_hardy_eq}
    Let $\alpha>0$ and $\g\in\mathcal{D}_\alpha$ such that $\g_n>0$ for all $n\ge1$. Then for any $u\in C_0(\N)$, we have the identity
    \begin{equation}
        \label{eq:thm:frac_hardy_eq}
        \sum_{n=1}^\infty  \overline{u_n} (-\lap)^\alpha u_n - \sum_{n=1}^\infty \frac{(-\lap)^\alpha \g_n}{\g_n} \abs{u_n}^2
          = -\!\sum_{1\le n<m} \!(-\lap)^\alpha_{m,n}\,\g_m\,\g_n\abs{\frac{u_m}{\g_m}-\frac{u_n}{\g_n}}^2.
    \end{equation}
\end{theorem}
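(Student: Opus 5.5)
The proof is a direct ground state transform. It uses only that the matrix of $(-\lap)^\alpha$ is real and symmetric, and it does not rely on the sign structure of the off-diagonal entries. This is why it works for every $\alpha>0$, in contrast to the graph-theoretic argument discussed in the preceding remark.

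Fix $u\in C_0(\N)$ and write $u_n=\g_n w_n$, so that $w\in C_0(\N)$ as well. Expand the right-hand side of \eqref{eq:thm:frac_hardy_eq}. Because the matrix is symmetric and the summand vanishes for $m=n$, the sum over $1\le n<m$ equals one half of the sum over all $m,n\in\N$. Using $|w_m-w_n|^2=|w_m|^2+|w_n|^2-2\Re(\overline{w_n}w_m)$, the right-hand side becomes
\[
 -\frac12\sum_{m,n\in\N}(-\lap)^\alpha_{m,n}\g_m\g_n\bigl(|w_m|^2+|w_n|^2\bigr)+\Re\sum_{m,n\in\N}(-\lap)^\alpha_{m,n}\g_m\g_n\overline{w_n}w_m,
\]
where both sums run over $m\neq n$.

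We then evaluate each piece.

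\emph{The squared terms.} By symmetry, the first sum is $-\sum_{n}|w_n|^2\g_n\sum_{m\neq n}(-\lap)^\alpha_{n,m}\g_m$. Since $\g\in\mathcal{D}_\alpha$, each inner series converges absolutely. It equals $(-\lap)^\alpha\g_n-(-\lap)^\alpha_{n,n}\g_n$ by \eqref{eq:def-frac-lap_extended}. The outer sum is finite because $w$ has compact support, so the rearrangement is legitimate.

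\emph{The cross term.} This sum is finite since $w$ is compactly supported. It equals $\sum_{m\neq n}(-\lap)^\alpha_{m,n}\overline{u_n}u_m$, which is real because the matrix is real symmetric, so the $\Re$ may be dropped.

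Adding the diagonal contribution $\sum_n(-\lap)^\alpha_{n,n}|u_n|^2$ recovers the full quadratic form $\sum_n\overline{u_n}(-\lap)^\alpha u_n$. The diagonal parts cancel, and the right-hand side becomes
\[
 \sum_n\overline{u_n}(-\lap)^\alpha u_n-\sum_n\frac{(-\lap)^\alpha\g_n}{\g_n}|u_n|^2,
\]
using $\g_n^2|w_n|^2=|u_n|^2$ and $\g_n>0$. This is exactly the left-hand side of \eqref{eq:thm:frac_hardy_eq}.

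Equivalently, the identity follows by applying Lemma~\ref{lem:lap_quad_formula}-style symmetrisation to the matrix $\g_m(-\lap)^\alpha_{m,n}\g_n$.

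The only delicate step is justifying the splitting of the double series in the squared terms, since $\g$ need not be summable. This is handled by keeping the outer index restricted to the finite support of $w$ and invoking the absolute convergence guaranteed by $\g\in\mathcal{D}_\alpha$. The cross term and the final bookkeeping are purely algebraic.
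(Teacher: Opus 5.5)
Your argument is correct, but it organises the expansion differently from the paper.

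\textbf{The paper's route.} The paper rewrites $\g_m\g_n|u_m/\g_m-u_n/\g_n|^2$ as $|u_m-u_n|^2+\frac{\g_n}{\g_m}|u_m|^2+\frac{\g_m}{\g_n}|u_n|^2-|u_m|^2-|u_n|^2$ and sums this against $(-\lap)^\alpha_{m,n}/2$. It then feeds the $|u_m-u_n|^2$ part into Lemma~\ref{lem:lap_quad_formula}. The computation therefore passes through the graph-Laplacian form of $(-\lap)^\alpha$ with the row-sum potential $q_\alpha(n)=(-1)^{n+1}\binom{2\alpha-1}{\alpha-n}$. That potential afterwards cancels against the $-|u_m|^2-|u_n|^2$ terms.

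\textbf{Your route.} You expand directly in $w=u/\g$ and match the cross term with the off-diagonal part of the quadratic form. Neither row sums of $(-\lap)^\alpha$ nor the binomial potential ever appear. As you observe, this amounts to applying the symmetrisation of Lemma~\ref{lem:lap_quad_formula} to the conjugated matrix $\g_m(-\lap)^\alpha_{m,n}\g_n$, whose row sums are exactly $\g_n(-\lap)^\alpha\g_n$, rather than to $(-\lap)^\alpha$ itself.

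\textbf{Comparison.} Your version is shorter and uses only symmetry and $\g\in\mathcal{D}_\alpha$. It is also more explicit about why the double series may be split. The paper's splitting tacitly also needs absolute row-summability of $(-\lap)^\alpha$, which holds by \eqref{eq:estimate_frac_lap}. What the paper's detour buys is a visible link to the ground state representation of the formal Schr\"odinger operator $L+q_\alpha$ from the preceding remark. The explicit potential itself plays no role in the final identity.
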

\begin{proof}
    For any $m\neq n$, elementary computations yield
    \begin{align*}
        \g_m\,\g_n\abs{\frac{u_m}{\g_m}-\frac{u_n}{\g_n}}^2 &=\frac{\g_n}{\g_m}\abs{u_m}^2+\frac{\g_m}{\g_n}\abs{u_n}^2 -2\Re{(\overline{u_n}u_m)} \\
            &= \abs{u_m-u_n}^2 +\frac{\g_n}{\g_m}\abs{u_m}^2+\frac{\g_m}{\g_n}\abs{u_n}^2 - \abs{u_n}^2-\abs{u_m}^2. 
    \end{align*}
    Multiplying both sides by $(-\lap)^\alpha_{m,n}/2$ and summing over all $m,n\in\N$, we get
    \begin{align*}
        \sum_{1\le n<m} \!\!(-\lap)^\alpha_{m,n}\,\g_m\,\g_n\abs{\frac{u_m}{\g_m}-\frac{u_n}{\g_n}}^2
          =\; &\frac{1}{2}\sum_{m,n\in\N}(-\lap)^\alpha_{n,m}\abs{u_m-u_n}^2
             +\sum_{n=1}^\infty  (-1)^n \binom{2\alpha-1}{\alpha-n} \abs{u_n}^2 \\
            &+ \sum_{n=1}^\infty \frac{\abs{u_n}^2}{\g_n}\paren{\sum_{m=1}^\infty (-\lap)^\alpha_{m,n}\g_m}\!,
    \end{align*}
    where we used the symmetry of $(-\lap)^\alpha$ and formula~\eqref{eq:potential_expl}. Applying Lemma~\ref{lem:lap_quad_formula} to the first line of the right-hand side and employing definition~\eqref{eq:def-frac-lap_extended} in the second, we obtain~\eqref{eq:thm:frac_hardy_eq}.
\end{proof}

The next step is to ensure the non-negativity of the remainder term appearing on the right-hand side of the fractional Hardy identity~\eqref{eq:thm:frac_hardy_eq}. However, it follows from formula~\eqref{eq:def-frac-lap_matrix1} that $(-\lap)^\alpha_{m,n}\le 0$ is true for all $m\neq n$ only when $\alpha\in(0,1]$. To overcome this limitation, we combine Theorem~\ref{thm:frac_hardy_eq} for $\alpha\in(0,1]$ with an iteration of the following weighted Hardy-type identity, which appeared in~\cite[Theorem 0]{stampach-waclawek_24-birman}. Its proof, which is analogous to the argument used in the proof of Theorem~\ref{thm:frac_hardy_eq}, can be found therein.
\begin{lemma}
    \label{lem:weighted_hardy}
    Let $V,\g\in C(\N)$ such that $\g_{n}>0$ for all $n\in\N$. Then for all $u\in C_0(\N)$, we have the identity
    \begin{equation}
        \label{eq:lem:weighted_hardy}
        \sum_{n=1}^\infty  V_{n} \abs{\grad u_{n}}^{2} + \sum_{n=1}^\infty  \frac{\ddiv(V\grad \g)_{n}}{\g_n}\abs{u_{n}}^{2} 
          = \sum_{n=1}^\infty  V_{n+1} \,\g_{n+1}\,\g_n\abs{\frac{u_{n+1}}{\g_{n+1}}-\frac{u_{n}}{\g_{n}}}^2.
    \end{equation}
\end{lemma}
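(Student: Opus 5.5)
The identity will follow from expanding the right-hand side and applying summation by parts, in the same way as the proof of Theorem~\ref{thm:frac_hardy_eq}. The only structural input is the product rule for the discrete gradient.

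\emph{Expanding the remainder.} Write $w_n \coloneq u_n/\g_n$, so that the right-hand side of \eqref{eq:lem:weighted_hardy} is $\sum_{n\ge1} V_{n+1}\g_{n+1}\g_n |w_{n+1}-w_n|^2$. For each $n$ we use the elementary identity, already used in the proof of Theorem~\ref{thm:frac_hardy_eq},
\[
 \g_{n+1}\g_n\abs{\frac{u_{n+1}}{\g_{n+1}}-\frac{u_n}{\g_n}}^2
 = \abs{u_{n+1}-u_n}^2 + \frac{\g_n}{\g_{n+1}}\abs{u_{n+1}}^2 + \frac{\g_{n+1}}{\g_n}\abs{u_n}^2 - \abs{u_{n+1}}^2 - \abs{u_n}^2 .
\]
Regrouping the last four terms gives $\frac{\g_n-\g_{n+1}}{\g_{n+1}}|u_{n+1}|^2 + \frac{\g_{n+1}-\g_n}{\g_n}|u_n|^2$, that is,
\[
 -\frac{\grad\g_{n+1}}{\g_{n+1}}\abs{u_{n+1}}^2 + \frac{\grad\g_{n+1}}{\g_n}\abs{u_n}^2 .
\]

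\emph{Summing by parts.} Multiply by $V_{n+1}$ and sum over $n\ge1$; all sums are finite because $u\in C_0(\N)$.
\begin{itemize}
\item[] The first term produces $\sum_{n\ge2} V_n|\grad u_n|^2$. Since $u_0=0$, we have $V_1|\grad u_1|^2 = V_1|u_1|^2$, so this equals $\sum_{n\ge1} V_n|\grad u_n|^2 - V_1|u_1|^2$.
\item[] The remaining terms, after shifting the index in the first of them, give
\[
 \sum_{n\ge1}\frac{|u_n|^2}{\g_n}\bigl(V_{n+1}\grad\g_{n+1} - V_n\grad\g_n\bigr) + V_1\grad\g_1\frac{|u_1|^2}{\g_1}.
\]
The bracket is exactly $\ddiv(V\grad\g)_n$.
\end{itemize}

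\emph{The boundary term.} This is the only delicate point. The leftover quantity is $-V_1|u_1|^2 + V_1\grad\g_1|u_1|^2/\g_1 = -V_1\g_0|u_1|^2/\g_1$. Under the paper's zero-extension convention $\g_0=0$, so it vanishes, and the identity \eqref{eq:lem:weighted_hardy} follows.

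Two points need to be checked against this convention. First, $\grad\g_1$ inside $\ddiv(V\grad\g)_1$ is read with $\g_0=0$, and $V_0$ never appears, so the calculation is consistent. Second, $\grad u_1=u_1$ because $u_0=0$. Since $\g_0=0$ is used only in the boundary computation, the main obstacle is checking that this convention is applied consistently throughout the lemma; the rest is routine algebra.
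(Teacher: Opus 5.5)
Your proof is correct and follows the route the paper indicates: expand $\g_{n+1}\g_n\bigl|u_{n+1}/\g_{n+1}-u_n/\g_n\bigr|^2$ as in the proof of Theorem~\ref{thm:frac_hardy_eq}, then sum by parts. You also correctly identify the leftover boundary term $-V_1\g_0|u_1|^2/\g_1$ and note that it vanishes only because of the zero-extension convention $\g_0=0$, which the paper indeed imposes.
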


The iterative procedure relies on an appropriate choice of the parameter sequence $\g$ at each step, which enables us to derive tractable representations of the weights $V$ as explicit functions of $\g$. However, this imposes additional monotonicity requirements on $\g$. The resulting fractional Hardy identity, which is formulated in the next theorem, may be of independent interest.

\begin{theorem}
    \label{thm:frac_birman_eq}
    Let $\alpha>0$. Suppose that $\g\in C(\N)$ satisfies the assumption
    \begin{equation}
        \label{assump:A1}
        \tag{A1}
        \grad^k\g\in\mathcal{D}_{\alpha-k}
        \quad\text{ with }\quad
        \grad^k\g_n>0
        \quad\text{ for all } n\in\N \text{ and } k\in\{0,\dots,\ceil{\alpha}-1\}.
    \end{equation}
    Then for all $u\in C_0(\N)$, we have the identity
    \begin{equation}
        \label{eq:thm:frac_birman_eq}
        \sum_{n=1}^\infty  \overline{u_n} (-\lap)^\alpha u_n - \sum_{n=1}^\infty  \frac{(-\lap)^\alpha \g_n}{\g_n} \abs{u_n}^2
          =\sum_{k=1}^{\ceil{\alpha}} R_k^{(\alpha)} (\g,u), 
    \end{equation}
    where the remainders are given by formulas
    \begin{equation}
        \label{eq:thm:frac_birman_rem}
        R_k^{(\alpha)}(\g,u) \coloneq
        \begin{cases}
          \;\;\;\;\sum\limits_{n=1}^\infty \frac{(-\lap)^{\alpha-k}\grad^{k}\g_{n+1}}{\grad^{k}\g_{n+1}} 
            \scalebox{0.7}{$\grad^{k-1}\g_{n+1} \grad^{k-1}\g_{n}$} \abs{ \frac{\grad^{k-1}u_{n+1}}{\grad^{k-1}\g_{n+1}} - \frac{\grad^{k-1}u_{n}}{\grad^{k-1}\g_{n}}}^{2}
            & \text{ if }k<\ceil{\alpha}, \\
          -\!\!\!\sum\limits_{1\le n<m} \!\! \scalebox{0.8}{$(-\lap)^{\!\{\alpha\}}_{m,n}$} \,
            \scalebox{0.7}{$\grad^{\ceil{\alpha}-1}\g_{m} \grad^{\ceil{\alpha}-1}\g_{n}$} \abs{ \frac{\grad^{\ceil{\alpha}-1}u_{m}}{\grad^{\ceil{\alpha}-1}\g_{m}} - \frac{\grad^{\ceil{\alpha}-1}u_{n}}{\grad^{\ceil{\alpha}-1}\g_{n}}}^{2}
            & \text{ if }k=\ceil{\alpha}.
        \end{cases}
    \end{equation}
\end{theorem}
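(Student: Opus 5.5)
Proceed by induction on $\ceil{\alpha}$. For $\ceil{\alpha}=1$, i.e. $\alpha\in(0,1]$, we have $\{\alpha\}=\alpha$ and the claim is exactly Theorem~\ref{thm:frac_hardy_eq} applied with $\g$, since (A1) says $\g\in\mathcal{D}_\alpha$ and $\g>0$. The sum on the right of \eqref{eq:thm:frac_birman_eq} then has only the term $k=\ceil{\alpha}=1$, which matches the remainder in \eqref{eq:thm:frac_hardy_eq}.

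For the inductive step, the key algebraic input is the factorisation $(-\lap)^\alpha=(-\lap_\Z)^{\alpha-1}(-\lap_\Z)$ on $\Z$ together with $-\lap_\Z=-\ddiv\grad$. We need its compressed version: for $v$ supported in $\N$ (zero extension),
\[
 (-\lap)^\alpha v_n = -\ddiv\bigl((-\lap)^{\alpha-1}\grad v\bigr)_n ,\qquad n\in\N .
\]
Here $\grad v$ is again supported in $\N$ because $v_0=0$, so it lies in the zero-extension world. The operator $\ddiv$ acting on $(-\lap_\Z)^{\alpha-1}\grad v$ restricted to $\N$ only needs values at $n,n+1\ge1$. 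I would verify this identity entrywise via Pascal's rule on \eqref{eq:def-frac-lap_matrix1}:
\[
\binom{2\alpha}{\alpha+j}=\binom{2\alpha-2}{\alpha+j-2}+2\binom{2\alpha-2}{\alpha+j-1}+\binom{2\alpha-2}{\alpha+j}.
\]
I would then check absolute convergence using $\grad\g\in\mathcal{D}_{\alpha-1}$ and summation by parts. The same identity, paired with $\overline{u}$ for $u\in C_0(\N)$ and summed by parts, gives
\[
\sum_n\overline{u_n}(-\lap)^\alpha u_n=\sum_n\overline{\grad u_n}\,(-\lap)^{\alpha-1}\grad u_n .
\]
This is a quadratic form in $w=\grad u\in C_0(\N)$.

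Then apply Lemma~\ref{lem:weighted_hardy} with $V_n\coloneq(-\lap)^{\alpha-1}\grad\g_n/\grad\g_n$, which is well defined since $\grad\g>0$ and $\grad\g\in\mathcal{D}_{\alpha-1}$. The crucial observation is that $\ddiv(V\grad\g)=\ddiv((-\lap)^{\alpha-1}\grad\g)=-(-\lap)^\alpha\g$. Hence Lemma~\ref{lem:weighted_hardy} yields
\[
\sum_n V_n\abs{\grad u_n}^2-\sum_n\frac{(-\lap)^\alpha\g_n}{\g_n}\abs{u_n}^2 = R^{(\alpha)}_1(\g,u).
\]
The remainder matches the $k=1$ formula, with $V_{n+1}$ giving the coefficient at $n+1$.

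It remains to show
\[
\sum_n\overline{w_n}(-\lap)^{\alpha-1}w_n-\sum_n V_n\abs{w_n}^2=\sum_{k=2}^{\ceil{\alpha}}R_k^{(\alpha)}(\g,u).
\]
This is the induction hypothesis applied with $\alpha-1$ and $\tilde\g=\grad\g$, since $\tilde\g$ satisfies (A1) for $\alpha-1$: the indices shift by one and $\ceil{\alpha-1}=\ceil{\alpha}-1$. Relabelling $k\mapsto k+1$ turns $R_{k}^{(\alpha-1)}(\grad\g,\grad u)$ into $R_{k+1}^{(\alpha)}(\g,u)$, using $(-\lap)^{(\alpha-1)-k}=(-\lap)^{\alpha-(k+1)}$ and $\{\alpha-1\}=\{\alpha\}$. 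Adding the two identities gives \eqref{eq:thm:frac_birman_eq}.

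The main obstacle is the compressed factorisation above: the compression of a product is not the product of compressions in general. It works here only because $\grad$ maps zero-extended sequences to zero-extended sequences, so the boundary term at $n=0$ vanishes. Justifying the interchange of sums for non-decaying $\g$ in $\mathcal{D}_\alpha$ also needs care; I would use the decay \eqref{eq:estimate_frac_lap} and the summability built into $\grad\g\in\mathcal{D}_{\alpha-1}$.
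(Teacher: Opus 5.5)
Your proposal is correct and follows the paper's proof essentially step for step: induction on $\ceil{\alpha}$ with Theorem~\ref{thm:frac_hardy_eq} as the base case, the identity $\sum_n\overline{u_n}(-\lap)^\alpha u_n=\sum_n\overline{\grad u_n}(-\lap)^{\alpha-1}\grad u_n$, Lemma~\ref{lem:weighted_hardy} with $V=(-\lap)^{\alpha-1}\grad\g/\grad\g$, and the relabelling $R_k^{(\alpha-1)}(\grad\g,\grad u)=R_{k+1}^{(\alpha)}(\g,u)$; the only difference is that you invoke the lemma before the induction hypothesis rather than after. You also insist on justifying $-\ddiv\bigl((-\lap)^{\alpha-1}\grad\g\bigr)=(-\lap)^\alpha\g$ for non-decaying $\g$, which the paper asserts without comment, and the point to nail down there is the boundary term at infinity $(a_{n-M-1}-a_{n-M})\g_M$ in the summation by parts, with $a_j=(-1)^j\binom{2\alpha-2}{\alpha-1+j}$: it vanishes for large $M$ if $\alpha\in\N$, and otherwise it has a limit by (A1) that must be $0$, since a nonzero limit would force $\g_M\asymp M^{2\alpha}$ and contradict $\g\in\mathcal{D}_\alpha$.
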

\begin{proof}
    The proof proceeds by induction on $\ceil{\alpha}\in\N$. The base case $\ceil{\alpha}=1$ coincides with Theorem~\ref{thm:frac_hardy_eq}.

    Suppose $\ceil{\alpha}\ge2$. Since $(-\lap_\Z)^\alpha$ is a Laurent operator, it commutes with the forward shift operator, and hence also with the discrete divergence. Recalling that $\lap_{\Z}=\ddiv\grad$ and $\ddiv^*=-\grad$ in $\ell^{2}(\Z)$, one readily verifies, for all $u\in C_0(\N)$, the equality
    \[
        \sum_{n=1}^\infty  \overline{u_n}(-\lap)^\alpha u_n = \langle u,(-\lap_{\Z})^{\alpha}u\rangle_{\ell^{2}(\Z)} = 
        \langle \grad u,(-\lap_{\Z})^{\alpha-1}\grad u\rangle_{\ell^{2}(\Z)} = 
        \sum_{n=1}^\infty  \overline{\grad u_n} (-\lap)^{\alpha-1}\grad u_n.
    \]
Let $\g\in C(\N)$ satisfy assumption~\eqref{assump:A1}. Then $\grad\g\in C(\N)$ fulfils the assumption for $\alpha-1$, and therefore the induction hypothesis, applied to $\grad u\in C_0(\N)$, with $\grad\g$, yields
    \begin{equation}
        \label{eq:pr:thm:frac_hardy_eq_++:1}
        \sum_{n=1}^\infty \overline{u_n}(-\lap)^\alpha u_n
          = \sum_{n=1}^\infty  \frac{(-\lap)^{\alpha-1}\grad\g_n}{\grad\g_n}\abs{\grad u_n}^2 + \sum_{k=1}^{\ceil{\alpha}-1} R_k^{(\alpha-1)}(\grad\g,\grad u).
    \end{equation}
    In view of the defining formula~\eqref{eq:thm:frac_birman_rem} of the remainders on the right-hand side, the obvious identity $\{\alpha\}=\{\alpha-1\}$ implies, for all $k\in\{1,\dots,\ceil{\alpha}-1\}$, that
    \[
        R_k^{(\alpha-1)}(\grad\g,\grad u) = R_{k+1}^{(\alpha)}(\g,u),
        \quad\text{ thus }\quad
        \sum_{k=1}^{\ceil{\alpha}-1} R_k^{(\alpha-1)}(\grad\g,\grad u) = \sum_{k=2}^{\ceil{\alpha}} R_k^{(\alpha)}(\g,u).
    \]

    Next, invoking Lemma~\ref{lem:weighted_hardy} with the weight $V_n\coloneq(-\lap)^{\alpha-1}\grad\g_n/\grad\g_n$, for all $n\in\N$, the first term on the right-hand side of~\eqref{eq:pr:thm:frac_hardy_eq_++:1} rewrites as
    \[
        \sum_{n=1}^\infty  \frac{(-\lap)^{\alpha-1}\grad\g_n}{\grad\g_n}\abs{\grad u_n}^2
          = -\sum_{n=1}^\infty  \frac{\ddiv(V\grad\g)_n}{\g_n} \abs{u_n}^2
            +\sum_{n=1}^\infty  V_{n+1}\,\g_{n+1}\,\g_n \abs{\frac{u_{n+1}}{\g_{n+1}}-\frac{u_{n}}{\g_{n}}}^2.
    \]
    While the divergence expression on the right-hand side simplifies as
    \[
        -\ddiv(V\grad\g) = -\ddiv\paren{\frac{(-\lap)^{\alpha-1}\grad\g}{\grad\g}\grad\g} = (-\lap)^\alpha\g,
    \]
    the second sum is equal to the remainder $R_1^{(\alpha)}(\g,u)$ by definition~\eqref{eq:thm:frac_birman_rem}. Therefore
    \[
        \sum_{n=1}^\infty  \frac{(-\lap)^{\alpha-1}\grad\g_n}{\grad\g_n}\abs{\grad u_n}^2 = \sum_{n=1}^\infty  \frac{(-\lap)^\alpha\g_n}{\g_n}\abs{u_n}^2+R_1^{(\alpha)}(\g,u),
    \]
    which, in combination with the identity~\eqref{eq:pr:thm:frac_hardy_eq_++:1}, completes the proof of Theorem~\ref{thm:frac_birman_eq}.
\end{proof}

Now we impose further constraints on the parameter sequence $\g$, which guarantee non-negativity of the new remainder terms in~\eqref{eq:thm:frac_birman_eq}, getting an abstract fractional Hardy inequality formulated in the next theorem. Its proof follows immediately from Theorem~\ref{thm:frac_birman_eq}.

\begin{theorem}
    \label{thm:abs_frac_birman_ineq}
    Let $\alpha>0$. Suppose that $\g\in C(\N)$ satisfies assumptions~\eqref{assump:A1}, and, in addition
    \begin{equation}
        \label{assump:A2}
        \tag{A2}
        (-\lap)^{\alpha-k}\grad^k\g_n\ge 0
        \quad\text{ for all } n\ge2 \text{ and } k\in\{1,\dots,\ceil{\alpha}-1\}.
    \end{equation}
    Then for all $u\in C_0(\N)$, we have the inequality
    \begin{equation}
        \label{eq:thm:abs_frac_birman_ineq}
        \sum_{n=1}^\infty  \overline{u_n} (-\lap)^\alpha u_n \ge \sum_{n=1}^\infty  \rho_n(\g) \abs{u_n}^2,
    \end{equation}
    where $\rho(\g)\coloneq(-\lap)^\alpha\g/\g$. If moreover,
    \begin{equation}
        \label{assump:A3}
        \tag{A3}
        (-\lap)^\alpha\g_n\ge0
        \quad\text{ for all }n\ge1,
    \end{equation}
    then $\rho(\g)\ge0$ is a discrete fractional Hardy weight.
\end{theorem}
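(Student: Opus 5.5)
The proof consists of applying the identity of Theorem~\ref{thm:frac_birman_eq} and checking that each remainder $R_k^{(\alpha)}(\g,u)$ on its right-hand side is non-negative. Since $\g$ satisfies \eqref{assump:A1}, the identity \eqref{eq:thm:frac_birman_eq} holds for every $u\in C_0(\N)$. Rearranging it gives
\[
\sum_{n=1}^\infty \overline{u_n}(-\lap)^\alpha u_n-\sum_{n=1}^\infty \rho_n(\g)\abs{u_n}^2=\sum_{k=1}^{\ceil{\alpha}}R_k^{(\alpha)}(\g,u),
\]
with $\rho(\g)=(-\lap)^\alpha\g/\g$. The inequality \eqref{eq:thm:abs_frac_birman_ineq} therefore follows once we show $R_k^{(\alpha)}(\g,u)\ge0$ for every $k$.

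\textbf{Remainders with $k<\ceil{\alpha}$.} Each summand of $R_k^{(\alpha)}(\g,u)$ is a product of three factors.
\begin{enumerate}[(a)]
\item The factor $\grad^{k-1}\g_{n+1}\grad^{k-1}\g_n$ is positive by \eqref{assump:A1}, since $k-1\le\ceil{\alpha}-2$.
\item The modulus squared is non-negative.
\item The coefficient $(-\lap)^{\alpha-k}\grad^k\g_{n+1}/\grad^k\g_{n+1}$ is evaluated at the index $n+1\ge2$. Its numerator is non-negative by \eqref{assump:A2}, and its denominator is positive by \eqref{assump:A1}, since $k\le\ceil{\alpha}-1$.
\end{enumerate}
Hence every summand is non-negative, and so is $R_k^{(\alpha)}(\g,u)$. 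This explains why \eqref{assump:A2} is only needed for $n\ge2$.

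\textbf{The last remainder, $k=\ceil{\alpha}$.} Here the coefficient is $-(-\lap)^{\{\alpha\}}_{m,n}$ with $m\neq n$, and $\{\alpha\}\in(0,1]$. From formula \eqref{eq:def-frac-lap_matrix1}, the off-diagonal entries of $(-\lap)^{\beta}$ are non-positive for $\beta\in(0,1]$, as noted in the paper. We can also check this directly from \eqref{eq:def-frac-lap_matrix2}. For $j=\abs{m-n}\ge1$ and $\beta\in(0,1)$, both $\sin(\pi\beta)$ and $\Gamma(j-\beta)/\Gamma(j+\beta+1)$ are positive, so the entry is negative. For $\beta=1$, the only nonzero off-diagonal entries are the values $-1$ at $j=1$.

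Consequently $-(-\lap)^{\{\alpha\}}_{m,n}\ge0$. The product $\grad^{\ceil{\alpha}-1}\g_m\,\grad^{\ceil{\alpha}-1}\g_n$ is positive by \eqref{assump:A1}, so $R_{\ceil{\alpha}}^{(\alpha)}(\g,u)\ge0$. All sums are finite because $u$ is compactly supported, so there are no convergence issues. This proves \eqref{eq:thm:abs_frac_birman_ineq}.

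\textbf{The case \eqref{assump:A3}.} If \eqref{assump:A3} also holds, then $\rho_n(\g)\ge0$ because $\g_n>0$. Thus $\rho(\g)$ is a non-negative sequence satisfying \eqref{eq:def:frac_hardy_weight} on $C_0(\N)$, which is by definition a discrete fractional Hardy weight.

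The only step that needs care is the sign of the off-diagonal entries of $(-\lap)^{\{\alpha\}}$, and this reduces to elementary Gamma-function positivity. No real obstacle is expected.
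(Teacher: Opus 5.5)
Your proof is correct and follows the paper's argument. The paper says the theorem follows immediately from the identity in Theorem~\ref{thm:frac_birman_eq}, using the same sign considerations you spell out: \eqref{assump:A1} gives positivity of the $\grad^k\g$ factors, \eqref{assump:A2} is needed only at indices $n+1\ge2$, and the off-diagonal entries of $(-\lap)^{\{\alpha\}}$ are non-positive for $\{\alpha\}\in(0,1]$.

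One minor correction: for $\alpha\notin\N$ the last remainder is not a finite sum even when $u\in C_0(\N)$, since pairs with $n$ in the support of $u$ and arbitrarily large $m$ contribute. Its terms are non-negative, however, and the series converges because $\grad^{\ceil{\alpha}-1}\g\in\mathcal{D}_{\{\alpha\}}$ by \eqref{assump:A1}, so your conclusion is unaffected.
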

    
\section{Proofs of Theorems~\ref{thm:opt_frac_hardy} and~\ref{thm:clas_frac_hardy}}
\label{sec:proof_main_results}

This section proves Theorems \ref{thm:opt_frac_hardy} and \ref{thm:clas_frac_hardy} using the abstract results of Section~\ref{sec:abstract_results} with the concrete choice of the parameter sequence defined by
\begin{equation}
    \label{eq:def:frac_g}
    \g^{(\alpha)}_n \coloneq \frac{\Gamma(n+\alpha-1/2)}{\Gamma(n)},
    \quad\text{ for } n\in\N,
\end{equation}
with the convention $\g^{(\alpha)}_n\coloneq0$ if $n\le0$. This specific form allows for closed-form expressions when the difference operators $\grad$ and $(-\lap)^\alpha$ are applied. This is crucial for verifying assumptions~\eqref{assump:A1}--\eqref{assump:A3}, which, in view of Theorem~\ref{thm:abs_frac_birman_ineq}, imply that the sequence
\[
    \rho^{(\alpha)}_n 
      \coloneq \rho_n(\g^{(\alpha)})
      = \frac{(-\lap)^\alpha \g_n^{(\alpha)}}{\g_n^{(\alpha)}},
    \quad\text{ for } n\in\N,
\]
constitutes a fractional Hardy weight for every $\alpha>0$. In this way, in Section~\ref{subsec:pr:opt_ineq}, we demonstrate that the weight in \eqref{eq:thm:opt_frac_hardy} satisfies the discrete fractional Hardy inequality. To complete the proof of Theorem~\ref{thm:opt_frac_hardy}, we establish the optimality of the weight $\rho^{(\alpha)}$ in Section~\ref{subsec:pr:opt} by closely inspecting the remainders from identity~\eqref{eq:thm:frac_birman_eq}.
Lastly, Section~\ref{subsec:pr:clas_ineq} derives a lower bound for the optimal weight $\rho^{(\alpha)}$ and proves Theorem~\ref{thm:clas_frac_hardy}.

\subsection{Proof of the fractional Hardy inequality for \texorpdfstring{$\rho^{(\alpha)}$}{rho}}
\label{subsec:pr:opt_ineq}

We begin by showing that the sequence $\g^{(\alpha)}$, defined by~\eqref{eq:def:frac_g}, behaves nicely under the action of the discrete derivative. The property mimics the elementary differentiation formula for monomials
\[
    \frac{\dd^k x^{\alpha-1/2}}{\dd x^k}=\pochhammer{\alpha-k+1/2}{k}\, x^{\alpha-k-1/2},
\]
where $(\nu)_n\coloneq \nu(\nu+1)\cdots(\nu+n-1)$ denotes the Pochhammer symbol for $\nu\in\R$.

\begin{lemma}
    \label{lem:frac_grad_g}
    Let $\alpha>0$ and $k\in\{0,\dots,\ceil{\alpha}-1\}$. Then, for all $n\in\Z$, we have the identity
    \begin{equation}
        \label{eq:lem:frac:grad_g}
        \grad^k \g^{(\alpha)}_n = \pochhammer{\alpha-k+1/2}{k} \g_n^{(\alpha-k)}.
    \end{equation}
\end{lemma}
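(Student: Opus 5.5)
The proof will go by induction on $k$, after first settling the single-step case $k=1$. The case $k=0$ is trivial, since $\pochhammer{\alpha+1/2}{0}=1$.

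For $k=1$ I will compute $\grad\g^{(\alpha)}_n=\g^{(\alpha)}_n-\g^{(\alpha)}_{n-1}$ directly from \eqref{eq:def:frac_g}, treating $n\ge2$, $n=1$ and $n\le0$ separately.

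For $n\ge2$, the functional equation $\Gamma(n)=(n-1)\Gamma(n-1)$ and $\Gamma(n+\alpha-1/2)=(n+\alpha-3/2)\Gamma(n+\alpha-3/2)$ give
\[
\grad\g^{(\alpha)}_n=\frac{\Gamma(n+\alpha-3/2)}{\Gamma(n)}\bigl[(n+\alpha-3/2)-(n-1)\bigr]=(\alpha-1/2)\,\frac{\Gamma(n+\alpha-3/2)}{\Gamma(n)}=\pochhammer{\alpha-1/2}{1}\g^{(\alpha-1)}_n.
\]
This step requires $\Gamma(n+\alpha-3/2)$ to be finite. That holds because $n+\alpha-3/2\ge\alpha+1/2>0$.

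For $n=1$ we have $\grad\g^{(\alpha)}_1=\g^{(\alpha)}_1=\Gamma(\alpha+1/2)$. We need this to equal $(\alpha-1/2)\Gamma(\alpha-1/2)$. This is valid provided $\alpha-1/2$ is not a pole of $\Gamma$, and in particular it holds when $\alpha>1/2$. For $n\le0$, both sides vanish by the zero convention.

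The induction step is as follows. Assume the identity holds for $k-1$ with $k\le\ceil{\alpha}-1$. Then $\grad^k\g^{(\alpha)}=\pochhammer{\alpha-k+3/2}{k-1}\grad\g^{(\alpha-k+1)}$. Applying the $k=1$ case with $\alpha$ replaced by $\alpha-k+1$ gives a factor $(\alpha-k+1/2)$. Since
\[
(\alpha-k+1/2)\pochhammer{\alpha-k+3/2}{k-1}=\pochhammer{\alpha-k+1/2}{k},
\]
the claimed identity follows.

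The only delicate point is the boundary index $n=1$ in the single-step case, applied with parameter $\beta=\alpha-k+1$. There we need $\beta-1/2=\alpha-k+1/2$ to avoid the poles of $\Gamma$, so that $\g^{(\alpha-k)}_1=\Gamma(\alpha-k+1/2)$ is a genuine finite value.

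The restriction $k\le\ceil{\alpha}-1$ handles this. It gives $\alpha-k\ge\alpha-\ceil{\alpha}+1=\{\alpha\}>0$. Hence $\alpha-k+1/2>1/2$, so every Gamma argument that appears, $n+\alpha-k-1/2$ with $n\ge1$, is positive.

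With this bound checked, each application of the one-step identity is legitimate, and the induction closes.
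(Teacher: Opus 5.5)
Your proof is correct and follows the paper's argument. Both establish the one-step identity $\grad\g^{(\beta)}_n=(\beta-1/2)\g^{(\beta-1)}_n$ via $\Gamma(x+1)=x\Gamma(x)$ and then induct on $k$, using $k\le\ceil{\alpha}-1$ so the one-step case is only applied with parameter $\alpha-k+1>1$. The only difference is cosmetic: the paper handles all $n\ge1$ at once (the factor $n-1$ removes the boundary term at $n=1$), whereas you check $n=1$ separately, which is equally valid.
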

\begin{proof}
    It is sufficient to verify the formula for $\alpha\ge1$ and $k=1$. The general result then follows immediately by mathematical induction. Identity~\eqref{eq:lem:frac:grad_g} holds trivially if $n\le0$. On the other hand, for $n\ge1$, we employ the recurrence relation for the Gamma function to obtain
    \[
        \grad\g_n^{(\alpha)}
          = \frac{\Gamma(n+\alpha-1/2)}{\Gamma(n)}\paren{1-\frac{n-1}{n+\alpha-3/2}}\!
          =(\alpha-1/2)\g_n^{(\alpha-1)},
    \]
    which coincides with the desired identity.
\end{proof}


The next lemma provides an explicit form of the sequence $(-\lap)^\alpha \g^{(\alpha)}$.

\begin{lemma}
    \label{lem:frac_lap_g}
    Let $\alpha>0$. Then $\g^{(\alpha)}\in\mathcal{D}_\alpha$ and, for all $n\in\N$, we have the identity
    \begin{equation*}
        (-\lap)^\alpha\g_n^{(\alpha)} 
          = \frac{\Gamma^2(\alpha+1/2)}{\pi} \frac{\Gamma(n-1/2)}{\Gamma(n+\alpha)}.
    \end{equation*}
\end{lemma}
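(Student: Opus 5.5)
We need to compute $\sum_{m\ge1}(-1)^{n+m}\binom{2\alpha}{\alpha+n-m}\frac{\Gamma(m+\alpha-1/2)}{\Gamma(m)}$ in closed form and show that it equals $\frac{\Gamma^2(\alpha+1/2)}{\pi}\frac{\Gamma(n-1/2)}{\Gamma(n+\alpha)}$.

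\textbf{Step 1: membership in $\mathcal{D}_\alpha$.} This is immediate. By \eqref{eq:Gamma_ratio_exp}, $\g^{(\alpha)}_m\sim m^{\alpha-1/2}$. By \eqref{eq:estimate_frac_lap}, the entries satisfy $|(-\lap)^\alpha_{n,m}|\le C_\alpha |m-n|^{-2\alpha-1}$. The terms therefore decay like $m^{-\alpha-3/2}$, which is summable for every $\alpha>0$.

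\textbf{Step 2: pass to an integral representation.} I would compute via the Fourier picture, writing $\g^{(\alpha)}$ as a superposition of geometric sequences. I would use the Beta integral
\[
\frac{\Gamma(m+\alpha-1/2)}{\Gamma(m)}=\frac{1}{\Gamma(1/2-\alpha)}\int_0^1 t^{m-1}(1-t)^{-\alpha-1/2}\,\dd t .
\]
This is valid for $\alpha<1/2$. The general case then follows either by analytic continuation in $\alpha$, with both sides real-analytic, or by using a Hankel contour. A cleaner alternative is the contour form $\frac{\Gamma(m+\alpha-1/2)}{\Gamma(m)}=\frac{\Gamma(\alpha+1/2)}{2\pi \ii}\oint z^{-m}(1-z)^{-\alpha-1/2}\cdots$, which works for all $\alpha$. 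Then
\[
(-\lap)^\alpha \g_n=\sum_{m\ge1}c_{n-m}\g_m,\qquad c_j=\frac{1}{2\pi}\int_{-\pi}^{\pi}\ee^{\ii j\xi}(2-2\cos\xi)^\alpha\dd\xi .
\]
Applied to the geometric sequence $t^{m-1}\mathbf 1_{m\ge1}$, this amounts to evaluating a Toeplitz symbol against $(1-t\ee^{-\ii\xi})^{-1}$. Note that $(2-2\cos\xi)^\alpha=(1-\ee^{\ii\xi})^\alpha(1-\ee^{-\ii\xi})^\alpha$. Expanding $(1-\ee^{-\ii\xi})^\alpha$ and $(1-t\ee^{-\ii\xi})^{-1}$ in powers of $\ee^{-\ii\xi}$, the combination of $(1-z)^{\alpha}$ with the weight $(1-t)^{-\alpha-1/2}$ after integration should collapse the one-sided factor. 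Heuristically, $\g^{(\alpha)}$ has generating function $\sum_m \g_m z^{m}=\Gamma(\alpha+1/2)\,z(1-z)^{-\alpha-1/2}$. Multiplying by the symbol $(1-z)^\alpha(1-1/z)^\alpha$ leaves $(1-z)^{-1/2}(1-1/z)^\alpha$. Extracting the coefficient of $z^n$ from this product gives a single convergent sum:
\[
\Gamma(\alpha+1/2)\sum_{j\ge0}\frac{(1/2)_{n-1+j}}{(n-1+j)!}\,(-1)^j\binom{\alpha}{j}.
\]

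\textbf{Step 3: evaluate the sum.} The sum is a terminating or convergent ${}_2F_1$ at argument $1$, namely
\[
\frac{(1/2)_{n-1}}{(n-1)!}\;{}_2F_1(-\alpha,\,n-1/2;\,n;\,1).
\]
Gauss's summation gives ${}_2F_1(-\alpha,n-1/2;n;1)=\frac{\Gamma(n)\Gamma(\alpha+1/2)}{\Gamma(n+\alpha)\Gamma(1/2)}$. Combining this with $(1/2)_{n-1}=\Gamma(n-1/2)/\sqrt\pi$ yields exactly $\frac{\Gamma^2(\alpha+1/2)}{\pi}\frac{\Gamma(n-1/2)}{\Gamma(n+\alpha)}$. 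Gauss's theorem applies because its convergence condition holds: $n-(-\alpha)-(n-1/2)=\alpha+1/2>0$.

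\textbf{Main obstacle.} The formal generating-function manipulation must be justified. The product of the symbol with a non-$\ell^2$ sequence (when $\alpha\ge1/2$) is only conditionally meaningful, and the rearrangement of the double sum needs care. To avoid this, I would instead verify the identity directly at the level of series. Write $(-\lap)^\alpha\g_n=\sum_{m\ge1}(-1)^{n+m}\binom{2\alpha}{\alpha+n-m}\g_m$. Using the Vandermonde-type factorisation
\[
(-1)^{j}\binom{2\alpha}{\alpha+j}=\sum_{i}(-1)^{i}\binom{\alpha}{i}(-1)^{i+j}\binom{\alpha}{i+j},
\]
which is the coefficient identity for $(1-z)^\alpha(1-1/z)^\alpha$, I would swap the sums. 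Absolute convergence justifies the swap when $\alpha$ is small, and analytic continuation in $\alpha$ extends it to all $\alpha>0$, since both sides are analytic in $\alpha$ for each fixed $n$. The inner sum $\sum_m(-1)^{\cdot}\binom{\alpha}{\cdot}\g_m$ is the Chu–Vandermonde/Gauss step producing $\Gamma(\alpha+1/2)(1/2)_{\cdot}/(\cdot)!$. The outer sum is the Gauss ${}_2F_1$ above.
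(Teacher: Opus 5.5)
Your proposal is correct, but it takes a different route from the paper.

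\textbf{The paper's route.} The paper works with the single series directly. After reindexing, it writes $(-\lap)^\alpha\g^{(\alpha)}_n$ as a multiple of $\sum_{m\ge0}\frac{(\alpha+1/2)_m(1-\alpha-n)_m}{m!\,\Gamma(\alpha-n+2+m)}$. It then applies Gauss's theorem once, in its regularised form; this is needed because $c=\alpha-n+2$ may be a non-positive integer when $\alpha\in\N$. Finally, Euler's reflection formula turns $(-1)^{n+1}/\Gamma(3/2-n)$ into $\Gamma(n-1/2)/\pi$.

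\textbf{Your route.} You factor the symbol $(2-2\cos\xi)^\alpha$ into one-sided pieces, i.e.\ $(-\lap_\Z)^\alpha=(\grad^\alpha)^*\grad^\alpha$ with the causal fractional difference $\grad^\alpha v_p=\sum_{k\ge0}(-1)^k\binom{\alpha}{k}v_{p-k}$. Since this factor is lower triangular, $\grad^\alpha\g^{(\alpha)}$ involves only finite sums. Chu--Vandermonde then gives $\grad^\alpha\g^{(\alpha)}=\frac{\Gamma(\alpha+1/2)}{\sqrt{\pi}}\,\g^{(0)}$. This is exactly Lemma~\ref{lem:frac_grad_g} with $k$ replaced by $\alpha$, the discrete analogue of $D^\alpha x^{\alpha-1/2}=\frac{\Gamma(\alpha+1/2)}{\Gamma(1/2)}x^{-1/2}$ for the Riemann--Liouville derivative. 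The remaining upper-triangular factor yields ${}_2F_1(-\alpha,n-1/2;n;1)$ with $c=n\in\N$. So you need neither the regularised Gauss formula nor the reflection formula.

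\textbf{Comparison.} The paper's computation is shorter and needs no interchange of summations. Yours costs one application of Fubini, but it explains structurally why $\g^{(\alpha)}$ is the natural parameter sequence.

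\textbf{The interchange.} The analytic continuation in $\alpha$ that you propose is unnecessary. Write $a_k=(-1)^k\binom{\alpha}{k}$; then $|a_k|\le C_\alpha(1+k)^{-\alpha-1}$. Since $\g^{(\alpha)}$ is monotone, $\g^{(\alpha)}_m\le C_\alpha p^{\max\{\alpha-1/2,0\}}$ for $1\le m\le p$. Hence
\[
\sum_{i\ge0}|a_i|\sum_{m=1}^{n+i}|a_{n+i-m}|\,\g^{(\alpha)}_m\le C_{\alpha}\sum_{i\ge0}(1+i)^{-\alpha-1}(n+i)^{\max\{\alpha-1/2,0\}}<\infty
\]
for every $\alpha>0$, not just small $\alpha$, and Fubini applies directly.

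The Beta integral in your Step~2 is misstated: the power should be $t^{m+\alpha-3/2}$, not $t^{m-1}$. Since you discard that route, it does no harm.
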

\begin{proof}
    Firstly, it follows from~\eqref{eq:Gamma_ratio_exp}, \eqref{eq:estimate_frac_lap}, and \eqref{eq:def:frac_g} that
    \[
        \abs{(-\lap)^\alpha_{n,m}\,\g^{(\alpha)}_m}
          \le \frac{C_{\alpha,n}}{m^{\alpha+3/2}}
    \]
    for all $m,n\in\N$. Thus $\g^{(\alpha)}\in\mathcal{D}_\alpha$; see \eqref{eq:def:dom_a}.
    
    Next, a straightforward computation shows that, for any $n\in\N$, we have
\begin{align*}
        (-\lap)^\alpha\g_n^{(\alpha)} &= \sum_{m=1}^\infty (-1)^{m+n} \binom{2\alpha}{\alpha+m-n} \frac{\Gamma(m+\alpha-1/2)}{\Gamma(m)} \\
          &= (-1)^{n+1} \Gamma(2\alpha+1) \sum_{m=0}^\infty \frac{(-1)^m}{m!} \frac{\Gamma(m+\alpha+1/2)}{\Gamma(\alpha+m-n+2)\Gamma(\alpha-m+n)} \\
     &= (-1)^{n+1} \Gamma(2\alpha+1) \frac{\Gamma(\alpha+1/2)}{\Gamma(\alpha+n)} \sum_{m=0}^\infty \frac{\pochhammer{\alpha+1/2}{m}\pochhammer{1-\alpha-n}{m}}{m!\,\Gamma(\alpha+m-n+2)}.
\end{align*}
    With the aid of \textit{Gauss's hypergeometric theorem}~\cite[Eq.~(15.4.20)]{DLMF}, which reads
    \[
        \sum_{m=0}^\infty \frac{\pochhammer{a}{m}\pochhammer{b}{m}}{m!\,\Gamma(c+m)} 
          = \frac{\Gamma(c-a-b)}{\Gamma(c-a)\Gamma(c-b)},
          \quad\text{ whenever }c-a-b>0,
    \]
    the expression further simplifies to
    \[
        (-\lap)^\alpha\g_n^{(\alpha)} =\Gamma^{2}(\alpha+1/2)\, \frac{(-1)^{n+1}}{\Gamma(\alpha+n)\Gamma(3/2-n)}.
    \]
 To conclude the proof, it suffices to apply the Euler reflection formula~\eqref{eq:euler_reflection}.
\end{proof}

\begin{proof}[Proof that $\rho^{(\alpha)}$ is a fractional Hardy weight]
    The assertion follows from the verification of assumptions~\eqref{assump:A1}--\eqref{assump:A3} in Theorem~\ref{thm:abs_frac_birman_ineq}, which, together with Lemma~\ref{lem:frac_lap_g}, subsequently imply that the weight, given by
    \[
        \rho^{(\alpha)}_n
          =\frac{(-\lap)^\alpha\g_n^{(\alpha)}}{\g^{(\alpha)}_n}
          =\frac{\Gamma^2(\alpha+1/2)}{\pi}\frac{\Gamma(n-1/2)\Gamma(n)}{\Gamma(n-1/2+\alpha)\Gamma(n+\alpha)}
    \]
    for $n\in\N$, is a discrete fractional Hardy weight. To arrive at the expression~\eqref{eq:thm:opt_frac_hardy}, we apply the duplication formula \cite[Eq.~(5.5.5)]{DLMF}
    \[
    \Gamma(2z)=\frac{2^{2z-1}}{\sqrt{\pi}}\Gamma(z)\Gamma\!\left(z+\frac{1}{2}\right).
    \]

    Since $\g^{(\alpha)}_n>0$ for all $n\ge1$ by the defining formula~\eqref{eq:def:frac_g}, Lemma~\ref{lem:frac_grad_g} immediately implies the inequalities in assumption~\eqref{assump:A1}. Moreover, by exploiting the transition between the parameter sequences $\g^{(\alpha)}$ provided by formula~\eqref{eq:lem:frac:grad_g}, we deduce that
    \[
        \grad^k\g^{(\alpha)}\in\mathcal{D}_{\alpha-k}
        \quad\text{ and }\quad
        (-\lap)^{\alpha-k}\grad^k\g^{(\alpha)}_n>0
        \quad\text{ for all }n\in\N \text{ and }k\in\{0,\dots,\ceil{\alpha}-1\},
    \]
    once this property has been verified for the case $k=0$ only. This, however, is the content of Lemma~\ref{lem:frac_lap_g}, and so the assumptions~\eqref{assump:A1}--\eqref{assump:A3} are all satisfied. The proof is complete.
\end{proof}

\begin{remark}
    Without going into technical details, we note that the one-parameter family
    \[
        \g^{(\alpha)}_n(s) 
          \coloneq \frac{\Gamma(n+\alpha-s)}{\Gamma(n)},
          \quad\text{ for } n\in\N,
    \]
    with the convention $\g^{(\alpha)}_n(s)\coloneq0$ for $n\le0$, also generates discrete fractional Hardy weights for any $s\in(0,1)$. The weights are given by the formula
    \[
        \rho^{(\alpha)}_n(s) 
          \coloneq \frac{(-\lap)^\alpha\g^{(\alpha)}_n(s)}{\g^{(\alpha)}_n(s)} 
          = \frac{\Gamma(s+\alpha)\Gamma(1-s+\alpha)}{\Gamma(s)\Gamma(1-s)} \frac{\Gamma(n-s)\Gamma(n)}{\Gamma(n-s+\alpha)\Gamma(n+\alpha)}
    \]
    for $n\in\N$.
    It can be shown that the leading term 
    \[
          \lim_{n\to\infty} n^{2\alpha}\rho^{(\alpha)}_n(s)=\frac{\Gamma(s+\alpha)\Gamma(1-s+\alpha)}{\Gamma(s)\Gamma(1-s)},
    \]
    as a function of $s$ on $(0,1)$, has a sharp maximum at $s=1/2$. Consequently, the weight can only be optimal near infinity if $s=1/2$, which corresponds to our choice in definition~\eqref{eq:def:frac_g}.
    
\end{remark}

\subsection{Proof of the optimality}
\label{subsec:pr:opt}
Here, we show the optimality of the fractional Hardy weight $\rho^{(\alpha)}$, and hence complete the proof of Theorem~\ref{thm:opt_frac_hardy}. The proof will be organised into three distinct parts, in which we demonstrate that the weight possesses all the optimality properties from Definition~\ref{def:optimality}: criticality, non-attainability, and optimality near infinity.

The leitmotif and central idea common to all three parts is that the remainders appearing on the right-hand side of identity~\eqref{eq:thm:frac_birman_eq} vanish under the formal substitution $u=\g$, i.e. $R_k^{(\alpha)}(\g,\g)=0$ for all $k\in\{1,\dots,\ceil{\alpha}\}$. Unfortunately, such a substitution cannot be performed directly, since no parameter sequence $\g$ satisfying \eqref{assump:A1} is compactly supported, i.e. $\g \notin C_0(\N)$. For this reason, we approximate by suitably chosen sequences $\{u^N\}_{N\ge2}\subset C_0(\N)$ such that $u^N\to\g$ pointwise, as $N\to\infty$, and inspect the asymptotic behaviour of $R_k^{(\alpha)}(\g,u^N)$ for $N\to\infty$.

In particular, for the proof of criticality and optimality near infinity, we select a smooth function $\chi$, whose precise form will be specified later, set
\begin{equation}
    \label{eq:def:xi}
    \xi^N(x)
      \coloneq
      \begin{cases}
          \chi \paren{\frac{\log{x}}{\log{N}}} &\text{ if }x>0,\\
          0 & \text{ if }x\le0,
      \end{cases}
\end{equation}
and define $u^N \coloneq \g^{(\alpha)} \xi^N$ for all $N\ge2$. We will use the notation $\xi^N_n \coloneq \xi^N(n)$ and abbreviate
\begin{equation}
    \label{eq:def:eta}
    \eta^N_{k,n} 
      \coloneq \frac{\grad^k (\g^{(\alpha)}\xi^N)_n}{\grad^k \g^{(\alpha)}_n}
      \quad\text{ for any } n\in\N \text{ and }k\in\{0,\dots,\ceil{\alpha}-1\}.
\end{equation}

Before proceeding to the derivation of the desired optimality properties, we first formulate two auxiliary lemmas that provide the necessary technical tools.

\begin{lemma}
    Using the notation introduced above, we have
    \begin{equation}
        \label{eq:lem:eta:expl}
        \eta_{k,n}^N
          =\sum_{j=0}^{k}\binom{k}{j} \frac{(n-j)_{j}}{(\alpha-k+1/2)_j}\,\grad^j\xi_n^N.
    \end{equation}
\end{lemma}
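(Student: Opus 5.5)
The plan is to apply a discrete Leibniz rule for $\grad^k$ to the product $\g^{(\alpha)}\xi^N$ and then divide by $\grad^k\g^{(\alpha)}_n$ using Lemma~\ref{lem:frac_grad_g}. Since $\grad$ is a backward difference, the rule to use is
\[
\grad(fg)_n=f_{n-j}\ldots
\]
— more precisely, with the shift $S f_n\coloneq f_{n-1}$ we have $\grad = I-S$. The convenient form of the Leibniz formula is
\[
\grad^k(fg)_n=\sum_{j=0}^k\binom{k}{j}\,\grad^{k-j}f_{n-j}\,\grad^j g_n .
\]
This follows by induction from $\grad(fg)_n=\grad f_n\, g_n + f_{n-1}\grad g_n$. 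Applying it with $g=\xi^N$ and $f=\g^{(\alpha)}$ puts the shifted factor on $\g$. Both sequences vanish for $n\le0$, so the identity holds for all $n\in\Z$.

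Next I would insert Lemma~\ref{lem:frac_grad_g}, which gives
\[
\grad^{k-j}\g^{(\alpha)}_{n-j}=(\alpha-k+j+1/2)_{k-j}\,\g^{(\alpha-k+j)}_{n-j},
\]
and divide by $\grad^k\g^{(\alpha)}_n=(\alpha-k+1/2)_k\,\g^{(\alpha-k)}_n$. The Pochhammer ratio simplifies via $(\alpha-k+1/2)_k=(\alpha-k+1/2)_j(\alpha-k+j+1/2)_{k-j}$ to $1/(\alpha-k+1/2)_j$. The remaining ratio is
\[
\frac{\g^{(\alpha-k+j)}_{n-j}}{\g^{(\alpha-k)}_n}=\frac{\Gamma(n+\alpha-k-1/2)/\Gamma(n-j)}{\Gamma(n+\alpha-k-1/2)/\Gamma(n)}=\frac{\Gamma(n)}{\Gamma(n-j)}=(n-j)_j ,
\]
since the Gamma factors in the numerator coincide. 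This gives the claimed formula.

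The only real care is needed for boundary indices $n\le j$. There $\g_{n-j}=0$ by convention, while $(n-j)_j=(n-j)(n-j+1)\cdots(n-1)$ contains the factor zero when $1\le n\le j$, so the two sides are consistent. I would check this explicitly, interpreting $\Gamma(n)/\Gamma(n-j)$ via the reciprocal Gamma function, which vanishes at nonpositive integers. I would also note that $\alpha-k+1/2>1/2$ for $k\le\ceil{\alpha}-1$, so none of the Pochhammer denominators vanish. Establishing the correct index placement in the Leibniz rule is the main (mild) obstacle; the rest is bookkeeping.
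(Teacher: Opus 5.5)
Your proposal is correct and follows the paper's proof essentially verbatim. It uses the same discrete Leibniz rule with the shift placed on $\g^{(\alpha)}$, applies Lemma~\ref{lem:frac_grad_g} to reduce the ratio $\grad^{k-j}\g^{(\alpha)}_{n-j}/\grad^k\g^{(\alpha)}_n$ to $(n-j)_j/(\alpha-k+1/2)_j$ for $n>j$, and makes the same observation that both sides vanish for $1\le n\le j$ (the only blemish is the stray unfinished formula $\grad(fg)_n=f_{n-j}\ldots$ in your first paragraph, which should simply be deleted).
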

\begin{proof}
    First, we apply the discrete Leibniz rule
    \[
        \grad^k (uv)_n
          = \sum_{j=0}^k \binom{k}{j} \grad^{k-j} u_{n-j} \grad^j v_n,
          \quad\text{ where } k\in\N_0 \text{ and }n\in\Z,
    \]
    which can be easily proved by induction for all $u,v\in C(\Z)$, to $u=\g^{(\alpha)}$ and $v=\xi^N$. We obtain
    \[
        \eta^N_{k,n}
          = \sum_{j=0}^k \binom{k}{j} \frac{\grad^{k-j} \g^{(\alpha)}_{n-j}}{\grad^k \g^{(\alpha)}_n} \grad^j \xi^N_n.
    \]
    Recalling Lemma~\ref{lem:frac_grad_g} and definition~\eqref{eq:def:frac_g}, we find, for $n>j$, that
    \[
        \frac{\grad^{k-j} \g^{(\alpha)}_{n-j}}{\grad^k \g^{(\alpha)}_n}
          = \frac{\pochhammer{\alpha-k+j+1/2}{k-j}}{\pochhammer{\alpha-k+1/2}{k}} \frac{\g^{(\alpha-k+j)}_{n-j}}{\g^{(\alpha-k)}_n}
          = \frac{\pochhammer{n-j}{j}}{\pochhammer{\alpha-k+1/2}{j}}.
    \]
    If $n\le j$ the numerator on the left vanishes, while $\pochhammer{n-j}{j}=0$, so the same formula remains true. Combining the above identities gives~\eqref{eq:lem:eta:expl}.
\end{proof}

Next, let us extend the definition of the backward difference $\grad$ to functions by putting $\grad f(x) \coloneq f(x) - f(x-1)$. By induction in $k\in\N$, one readily verifies that the $k$-fold composition of $\grad$ admits the integral representation
\begin{equation}
    \label{eq:grad_int_repr}
    \grad^k f(x)
      = \int_{[0,1]^k} f^{(k)}(x-t_1-\dots-t_k) \dd t_1\dots\dd t_k,
\end{equation}
whenever $f$ is a $k$-times continuously differentiable function on $[x-k,x]$. Moreover, in view of~\eqref{eq:lem:eta:expl}, we denote
\[
    \eta^N_k(x)
      \coloneq \sum_{j=0}^{k}\binom{k}{j} \frac{(x-j)_{j}}{(\alpha-k+1/2)_j}\,\grad^j\xi^N(x)
      \quad\text{ for } x\in\R.
\]

\begin{lemma}
    \label{lem:eta'_bound}
    Using the notation introduced above, we have the bound
    \begin{equation}
        \label{eq:lem:eta'_bound}
        \abs{(\eta^N_k)'(x)}
          \le \frac{C_{\alpha,k}}{\log{N}} \frac{1}{x}
          \quad\text{ for all } x\ge k+1.
    \end{equation}
\end{lemma}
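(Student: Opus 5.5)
Differentiating the definition of $\eta^N_k$ term by term with the product rule gives
\[
(\eta^N_k)'(x)=\sum_{j=0}^{k}\binom{k}{j}\frac{1}{(\alpha-k+1/2)_j}\Bigl[\tfrac{\dd}{\dd x}(x-j)_j\,\grad^j\xi^N(x)+(x-j)_j\,\grad^j(\xi^N)'(x)\Bigr],
\]
where we used that $\grad$ commutes with differentiation. Note that $(\alpha-k+1/2)_j\neq0$, since $\alpha-k+1/2>0$ for $k\le\ceil{\alpha}-1$. It therefore suffices to bound each of the two kinds of terms by $C_{\alpha,k}/(x\log N)$ for $x\ge k+1$. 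The polynomial factors are elementary: $(x-j)_j$ is a polynomial of degree $j$ and its derivative has degree $j-1$. Because $x\ge k+1\ge j+1$, we get $0<(x-j)_j\le x^j$ and $\bigl|\tfrac{\dd}{\dd x}(x-j)_j\bigr|\le C_k x^{j-1}$.

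The key input is a derivative bound for $\xi^N$. For $y>0$ we have $\xi^N(y)=\chi(\log y/\log N)$. By induction (Faà di Bruno), $(\xi^N)^{(i)}(y)$ is a finite combination of terms $\chi^{(p)}(\log y/\log N)\,(\log N)^{-p}\,y^{-i}$ with $1\le p\le i$. Assuming $\chi$ is smooth with bounded derivatives, which I will require of the yet unspecified $\chi$ (e.g.\ compactly supported derivative), this gives
\[
|(\xi^N)^{(i)}(y)|\le \frac{C_{i}}{\log N}\,y^{-i}\quad (i\ge1,\ N\ge2,\ y>0),
\]
with the constant depending on $\chi$, which is fixed.

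Now apply the integral representation \eqref{eq:grad_int_repr}. For $x\ge k+1$ and $j\le k$, the interval $[x-j,x]$ lies in $[1,\infty)$, where $\xi^N$ is smooth. Hence
\[
\grad^j\xi^N(x)=\int_{[0,1]^j}(\xi^N)^{(j)}(x-t_1-\dots-t_j)\,\dd t,\qquad
\grad^j(\xi^N)'(x)=\int_{[0,1]^j}(\xi^N)^{(j+1)}(x-t_1-\dots-t_j)\,\dd t .
\]
The arguments $y$ satisfy $y\ge x-j\ge x/(k+1)$, since $x\ge k+1$ implies $x-j\ge x-k\ge x/(k+1)$. Therefore, for $j\ge1$,
\[
|\grad^j\xi^N(x)|\le \frac{C_{k}}{\log N}\,x^{-j},\qquad |\grad^j(\xi^N)'(x)|\le \frac{C_{k}}{\log N}\,x^{-j-1},
\]
and the second bound also holds for $j=0$.

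Combining these, the $j=0$ term of the first kind vanishes, because $\tfrac{\dd}{\dd x}(x)_0=0$. Every remaining term of the first kind is at most $C x^{j-1}\cdot x^{-j}/\log N$, and every term of the second kind is at most $x^{j}\cdot x^{-j-1}/\log N$. Summing finitely many such terms yields \eqref{eq:lem:eta'_bound}.

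The only delicate point is the $j=0$ term of the second kind. This term is $(\xi^N)'(x)=\chi'(\log x/\log N)/(x\log N)$, and it produces exactly the claimed rate. So the proof rests on the fact that every derivative of $\xi^N$ carries at least one factor $1/\log N$. The remaining requirement is that $x\ge k+1$ keeps the shifted arguments $x-t_1-\dots-t_j$ away from the singularity at $0$ and comparable to $x$.
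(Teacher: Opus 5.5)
Your proof is correct and follows the paper's argument step by step. The steps are the same: the bound $|(\xi^N)^{(i)}(y)|\le C_i\,y^{-i}/\log N$ from repeated differentiation, its transfer to $\grad^j\xi^N$ and $\grad^j(\xi^N)'$ through the integral representation \eqref{eq:grad_int_repr} for $x\ge j+1$, and the polynomial bounds $C\,x^j$ and $C\,x^{j-1}$ on $(x-j)_j$ and its derivative. Your extra remarks (that $(\alpha-k+1/2)_j\neq0$, that the derivatives of $\chi$ are bounded, and that $x-j\ge x/(k+1)$) make explicit details the paper leaves implicit.
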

\begin{proof}
    For every $j\in\N$, repeated differentiation in~\eqref{eq:def:xi} yields the estimate
    \begin{equation}
        \label{eq:pr:lem:eta'_bound:xi_bound}
        \abs{(\xi^N)^{(j)}(x)}
          \le \frac{C_j}{\log{N}} \frac{1}{x^j}
          \quad\text{ for all }x>0.
    \end{equation}
    By employing the integral representation~\eqref{eq:grad_int_repr} and its derivative, we derive the upper bounds
    \[
        \abs{\grad^j \xi ^N(x)} \le \frac{C_j}{\log{N}} \frac{1}{x^j}
        \quad\text{ and }\quad
        \abs{\grad^j (\xi ^N)'(x)} \le \frac{C_j}{\log{N}} \frac{1}{x^{j+1}}
    \]
    for all $x\ge j+1$ and $j\in\N$ (the second estimate remains true even for $j=0$ by~\eqref{eq:pr:lem:eta'_bound:xi_bound}). Finally, since $\pochhammer{x-j}{j}$ is a polynomial in $x$ of degree $j$, we may bound this polynomial and its derivative by $C_j x^j$ and $C_j x^{j-1}$, respectively. Altogether, we obtain
    \begin{align*}
        \abs{(\eta^N_k)'(x)} 
          &\le \sum_{j=1}^k \binom{k}{j} \frac{C_j x^{j-1}}{\pochhammer{\alpha-k+1/2}{j}} \frac{C_j}{\log{N}} \frac{1}{x^j}
            + \sum_{j=0}^k \binom{k}{j} \frac{C_j x^{j}}{\pochhammer{\alpha-k+1/2}{j}} \frac{C_j}{\log{N}} \frac{1}{x^{j+1}} \\
          &\le \frac{C_{\alpha,k}}{\log{N}}\frac{1}{x}
    \end{align*}
    for all $x\ge k+1$. The proof of Lemma~\ref{lem:eta'_bound} is complete.
\end{proof}

We are now in a position to prove the optimality of the weight $\rho^{(\alpha)}$. By Theorem~\ref{thm:frac_birman_eq} and Lemma~\ref{lem:frac_grad_g}, we have the identity
\begin{equation}
    \label{eq:frac_hardy_eq_rho_a}
    \sum_{n=1}^\infty  \overline{u_n} (-\lap)^\alpha u_n - \sum_{n=1}^\infty  \rho^{(\alpha)}_n \abs{u_n}^2
          =\sum_{k=1}^{\ceil{\alpha}} R_k^{(\alpha)} (u)
\end{equation}
for all $u\in C_0(\N)$, where
\begin{numcases}
    {R_k^{(\alpha)} (u) \coloneq}
      \;\;\;\;\sum\limits_{n=1}^\infty  
        \scalebox{0.7}{$\rho^{(\alpha-k)}_{n+1}\grad^{k-1}\g^{(\alpha)}_{n+1} \grad^{k-1}\g^{(\alpha)}_{n}$} \abs{ \frac{\grad^{k-1}u_{n+1}}{\grad^{k-1}\g^{(\alpha)}_{n+1}} - \frac{\grad^{k-1}u_{n}}{\grad^{k-1}\g^{(\alpha)}_{n}}}^{2}
        & \text{ if }$k<\ceil{\alpha}$, \label{eq:rem_frac_hardy_eq_rho_a:1} \\
      -\!\!\!\sum\limits_{1\le n<m} \!\! \scalebox{0.8}{$(-\lap)^{\!\{\alpha\}}_{m,n}$} \,
        \scalebox{0.7}{$\grad^{\ceil{\alpha}-1}\g^{(\alpha)}_{m} \grad^{\ceil{\alpha}-1}\g^{(\alpha)}_{n}$} \abs{ \frac{\grad^{\ceil{\alpha}-1}u_{m}}{\grad^{\ceil{\alpha}-1}\g^{(\alpha)}_{m}} - \frac{\grad^{\ceil{\alpha}-1}u_{n}}{\grad^{\ceil{\alpha}-1}\g^{(\alpha)}_{n}}}^{2}
        & \text{ if }$k=\ceil{\alpha}$. \label{eq:rem_frac_hardy_eq_rho_a:2}
\end{numcases}

\hypertarget{pr:thm:optimality:a}{a)} \emph{Proof of criticality}:
Consider a fractional Hardy weight $\tilde{\rho}$ such that $\tilde{\rho}_n\ge\rho^{(\alpha)}_n$ for all $n\in\N$. Employing identity~\eqref{eq:frac_hardy_eq_rho_a} for the weight $\rho^{(\alpha)}$ and inequality~\eqref{eq:def:frac_hardy_weight} for $\tilde{\rho}$, we find that
\[
    0 
      \le \sum_{n=1}^\infty \paren{\tilde{\rho}_n-\rho^{(\alpha)}_n} \abs{u_n}^2
      \le \sum_{k=1}^{\ceil{\alpha}} R_k^{(\alpha)} (u)
\]
for all $u\in C_0(\N)$. We will use these inequalities for the regularisation of the parameter sequence $\g^{(\alpha)}$ in the form $u^N=\g^{(\alpha)}\xi^N$, where $\xi^N$ is defined by~\eqref{eq:def:xi} with the cut-off $\chi$ chosen as a smooth function satisfying 
\[
    \chi(x)
      =
      \begin{cases}
          1 & \text{ if } x\leq 1, \\
          0 & \text{ if } x\geq 2.
      \end{cases}
\]
Recall the notation~\eqref{eq:def:eta} and notice that, with this choice, we have
\begin{equation}
    \label{eq:pr:criticality:xi_eta_val}
    \xi^N_n =
      \begin{cases}
          1 & \text{ if } 1 \le n\le N, \\
          0 & \text{ if } \hspace{22pt} n\ge N^2,
      \end{cases}
    \quad\text{ and }\quad
    \eta^N_{k,n} =
      \begin{cases}
          1 & \text{ if } 1 \le n\le N, \\
          0 & \text{ if } \hspace{22pt} n\ge N^2+k
      \end{cases}
\end{equation}
for any $k\in\N_0$. Moreover, $\xi^N\to1$, and hence $u^N\to\g^{(\alpha)}$, pointwise, as $N\to\infty$. 

We will show that the remainders admit the estimate
\begin{equation}
    \label{eq:pr:criticality:rem_est}
    R^{(\alpha)}_k (u^N)
      \le \frac{C_\alpha}{\log{N}}
      \quad\text{ for all } k\in\{1,\dots,\ceil{\alpha}\} \text{ and } N>\ceil{\alpha}.
\end{equation}
Consequently, with the aid of Fatou's lemma, we find that
\[
    \sum_{n=1}^\infty \paren{\tilde{\rho}_{n}-\rho^{(\alpha)}_n} \abs{\g^{(\alpha)}_n}^2 
        = \sum_{n=1}^\infty \paren{\tilde{\rho}_{n}-\rho^{(\alpha)}_n} \lim_{N\to\infty}\abs{u_n^N}^2 
        \le \liminf_{N\to\infty} \sum_{n=1}^\infty  \paren{\tilde{\rho}_{n}-\rho^{(\alpha)}_n} \abs{u_n^N}^2 
        = 0.
\]
From the non-negativity of the summands on the left and positivity of $\g^{(\alpha)}_n$ for all $n\in\N$, ensured by assumption~\eqref{assump:A1}, we conclude that $\tilde{\rho}_n=\rho^{(\alpha)}_n$ for all $n\in\N$, which proves the criticality of $\rho^{(\alpha)}$.

It remains to establish the estimate~\eqref{eq:pr:criticality:rem_est}, which is to be done in the rest of the proof. We start by showing that
\begin{equation}
    \label{eq:pr:criticality:eta_est}
    \abs{\eta_{k,m}^N-\eta_{k,n}^N}
      \le \frac{C_{\alpha,k}}{\log{N}}\log\frac{m}{n}
      \quad\text{ for all } 1\le n<m
\end{equation}
and $k\in\{0,\dots,\ceil{\alpha}-1\}$. Assume first that $n\le N$. If, in addition, $n<m\le N$, then by~\eqref{eq:pr:criticality:xi_eta_val}, we have $\eta^N_{k,n}=\eta^N_{k,m}=1$, the left-hand side of~\eqref{eq:pr:criticality:eta_est} vanishes, and so the estimate holds trivially. If $n \le N <m$, we have $\eta^N_{k,n}=\eta^N_{k,N}=1$, and with the aid of Lemma~\ref{lem:eta'_bound}, we infer
\[
    \abs{\eta_{k,m}^N-\eta_{k,n}^N}
      = \abs{\eta_{k,m}^N-\eta_{k,N}^N}
      \le \int_N^m \abs{(\eta^N_k)'(x)}\dd x
      \le \frac{C_{\alpha,k}}{\log{N}} \log{\frac{m}{N}}
      \le \frac{C_{\alpha,k}}{\log{N}} \log{\frac{m}{n}}
\]
for all $N>\ceil{\alpha}\ge k+1$. Second, assume $n\ge N$, and take $m>n$. Similarly as above, the integration of \eqref{eq:lem:eta'_bound}, this time from $n$ to $m$, gives precisely~\eqref{eq:pr:criticality:eta_est}.

To complete the proof, we estimate the remainders $R_k^{(\alpha)}(u^N)$, defined by~\eqref{eq:rem_frac_hardy_eq_rho_a:1} and ~\eqref{eq:rem_frac_hardy_eq_rho_a:2}. Recall the asymptotics
\begin{equation}
    \label{eq:rho_g_asy}
    \rho^{(\alpha)}_n =\frac{\Gamma^2(\alpha+1/2)}{\pi}\frac{1}{n^{2\alpha}}\!\left[1+\mathcal{O}\!\left(\frac{1}{n}\right)\!\right]
    \quad\text{ and }\quad
    \g^{(\alpha)}_n = n^{\alpha-1/2}\left[1+\mathcal{O}\!\left(\frac{1}{n}\right)\!\right]
\end{equation}
for $n\to\infty$, which stem from~\eqref{eq:Gamma_ratio_exp} and formulas \eqref{eq:thm:opt_frac_hardy} and \eqref{eq:def:frac_g}. Consequently, employing~\eqref{eq:lem:frac:grad_g}, \eqref{eq:pr:criticality:xi_eta_val}, and~\eqref{eq:pr:criticality:eta_est} with $m=n+1$, or more precisely
\[
    \abs{\eta^N_{k,n+1}-\eta^N_{k,n}}
      \le \frac{C_{\alpha,k}}{\log{N}}\log\paren{1+\frac{1}{n}}
      \le \frac{C_{\alpha,k}}{\log{N}}\frac{1}{n},
\]
for $k\in\{1,\dots,\ceil{\alpha}-1\}$, we infer
\begin{align*}
    R_k^{(\alpha)}(u^N)
      & = \sum_{n=1}^{N^2+k} \rho^{(\alpha-k)}_{n+1} \grad^{k-1}\g^{(\alpha)}_{n+1} \grad^{k-1}\g^{(\alpha)}_{n} \abs{\eta^N_{k-1,n+1}-\eta^N_{k-1,n}}^2 \\
      & \le \frac{C_\alpha}{\log^2{N}} \sum_{n=1}^{N^2+k} \frac{1}{n^{2(\alpha-k)}} n^{2(\alpha-k+1/2)}\frac{1}{n^2} 
        \le \frac{C_\alpha}{\log^2{N}} \int_1^{N^2+k} \frac{\dd n}{n}
        \le \frac{C_\alpha}{\log{N}}
\end{align*}
for any $N>\ceil{\alpha}$. Now, suppose $k=\ceil{\alpha}$. If $\alpha\in\N$ is an integer, then $\{\alpha\}=1$, the double sum in~\eqref{eq:rem_frac_hardy_eq_rho_a:2} reduces as the only non-zero terms correspond to $m=n+1$, and one may proceed similarly as above to obtain
\[
    R_{\ceil{\alpha}}^{(\alpha)} (u^N) 
      = \sum_{n=1}^{N^2+\alpha} \grad^{\alpha-1}\g^{(\alpha)}_{n+1} \grad^{\alpha-1}\g^{(\alpha)}_{n} \abs{\eta^N_{\alpha-1,n+1}-\eta^N_{\alpha-1,n}}^2
      \le \frac{C_\alpha}{\log{N}}.
\]
Finally, if $\alpha\notin\N$, we make use of~\eqref{eq:pr:criticality:eta_est} and the bound~\eqref{eq:estimate_frac_lap} for the matrix elements $(-\lap)^{\{\alpha\}}_{m,n}$ to deduce that
\begin{align*}
    R_{\ceil{\alpha}}^{(\alpha)}(u^N)
      &\le C_\alpha  \sum_{n=1}^{N^2+\ceil{\alpha}} \sum_{j=1}^\infty \frac{1}{j^{2\{\alpha\}+1}}n^{\{\alpha\}-1/2}(n+j)^{\{\alpha\}-1/2}\,\frac{\log^2{(1+j/n)}}{\log^2{N}} \\
      &= \frac{C_\alpha}{\log^2{N}} \sum_{n=1}^{N^2+\ceil{\alpha}}\frac{1}{n^2} \sum_{j=1}^\infty \frac{(1+j/n)^{\{\alpha\}-1/2}}{(j/n)^{2\{\alpha\}+1}}\log^2{(1+j/n)}.
\end{align*}
Moreover, the inner sum can be further estimated by the corresponding integral, hence, after using the substitution $t=j/n$, we infer that
\[
    R_{\ceil{\alpha}}^{(\alpha)}(u^N)
      \le \frac{C_\alpha}{\log^2{N}} \sum_{n=1}^{N^2+\ceil{\alpha}}\frac{1}{n} \paren{\int_0^\infty \frac{(1+t)^{\{\alpha\}-1/2}}{t^{2\{\alpha\}+1}} \log^2{(1+t)}\dd t}
      \le \frac{C_\alpha}{\log{N}},
\]
since $\{\alpha\}\in(0,1)$ and the integral in parentheses converges to a positive constant. The proof of the estimate~\eqref{eq:pr:criticality:rem_est}, and therefore of the criticality of the weight $\rho^{(\alpha)}$ is complete.

b) \emph{Proof of optimality near infinity}:
Fix arbitrary $M\ge 1$. Recalling the alternative definition of optimality near infinity~\eqref{eq:rem:opt_near_inf}, we prove that the weight $\rho^{(\alpha)}$ enjoys this property by finding a sequence $\{ u^N\}_{N>M}\subset C_0(\N)$, such that $u^N_n=0$ for all $n<M$ and
\begin{equation}
    \label{eq:pr:opt_near_inf:def}
    \lim_{N\to\infty} \frac{\sum_{k=1}^{\ceil{\alpha}}R_k^{(\alpha)}(u^N)}{\sum_{n=1}^\infty\rho^{(\alpha)}_n\abs{u^N_n}^2}
      =0.
\end{equation}
This time, we set $u^N=\g^{(\alpha)}\xi^N$, with $\xi^N$ still defined by~\eqref{eq:def:xi}, but $\chi\in C_0^\infty(\R)$ is a bump function satisfying
\[
    \chi(x) =
    \begin{cases}
        0 & \text{ if } x \le 1 \text{ or } x \ge 4, \\
        1 & \text{ if } x \in [2,3].
    \end{cases}
\]
With this choice, we have
\[
    \xi^N_n =
      \begin{cases}
          0 & \text{ if } \hspace{31.5pt} n\le N, \\
          1 & \text{ if } N^2\le n\le N^3, \\
          0 & \text{ if } \hspace{31.5pt} n\ge N^4,
      \end{cases}
    \quad\text{ and }\quad
    \eta^N_{k,n} =
      \begin{cases}
          0 & \text{ if } \hspace{52.4pt} n\le N, \\
          1 & \text{ if } N^2+k \le n\le N^3, \\
          0 & \text{ if } \hspace{52.4pt} n\ge N^4+k.
      \end{cases}
\]
Therefore $u^N_n=0$ for all $n\le M$, whenever $N>M$.

Proceeding almost verbatim as in the proof of criticality~\hyperlink{pr:thm:optimality:a}{a)}, it is straightforward to verify that the estimates~\eqref{eq:pr:criticality:eta_est} still hold, hence we may bound the remainders as
\[
    R_k^{(\alpha)}(u^N)
      \le \frac{C_\alpha}{\log^2{N}} \sum_{n=1}^{N^4+k} \frac{1}{n} \le \frac{C_\alpha}{\log{N}}
\]
for all $k\in\{1,\dots,\ceil{\alpha}\}$ and $N>\max\{M,\ceil{\alpha}\}$. On the other hand, making use of the formulas in~\eqref{eq:rho_g_asy}, the denominator in~\eqref{eq:pr:opt_near_inf:def} can be estimated from below by
\[
    \sum_{n=1}^\infty \rho^{(\alpha)}_n \abs{u^N_n}^2 
      \ge \sum_{n=N^2}^{N^3} \rho^{(\alpha)}_n \abs{\g^{(\alpha)}_n}^2
      \ge C_\alpha \sum_{n=N^2}^{N^3} \frac{1}{n^{2\alpha}} n^{2\alpha-1}
      \ge C_\alpha \log{N}.
\]
Combining these two estimates implies~\eqref{eq:pr:opt_near_inf:def} and completes the proof of optimality near infinity.

c) \emph{Proof of non-attainability}:
In order to prove the non-attainability, suppose that $u\in \ell^2(\N)$ attains equality in~\eqref{eq:thm:abs_frac_birman_ineq} with $\rho(\g)=\rho^{(\alpha)}$. Consider a sequence $\{u^N\}_{N\in\N} \subset C_0(\N)$ of compactly supported sequences approximating $u$, i.e. $u^N\to u$ as $N\to\infty$ in the norm of $\ell^2(\N)$, and thus also pointwise. Identity~\eqref{eq:frac_hardy_eq_rho_a} implies
\[
    0\le \sum_{k=1}^{\ceil{\alpha}} R_k^{(\alpha)}(u^N)
      = \sum_{n=1}^\infty \overline{u^N_n}(-\lap)^\alpha u^N_n - \sum_{n=1}^\infty \rho^{(\alpha)}_n\abs{u^N_n}^2
\]
for all $N\ge1$. Since both $(-\lap)^\alpha$ and $\rho^{(\alpha)}$ determine bounded operators on $\ell^2(\N)$ (see Remark~\ref{rem:def:frac_hardy_weight}), the right-hand side of the identity tends to zero as $N\to\infty$. Consequently, the non-negative remainder $R_k^{(\alpha)}(u^N)\to0$, as $N\to\infty$, for any $k\in\{1,\dots,\ceil{\alpha}\}$. Consider the particular case $k=\ceil{\alpha}$. Since the prefactor
\[
-(-\lap)^{\!\{\alpha\}}_{m,n}\,\grad^{\ceil{\alpha}-1}\g^{(\alpha)}_{m} \grad^{\ceil{\alpha}-1}\g^{(\alpha)}_{n}
\]
in \eqref{eq:rem_frac_hardy_eq_rho_a:2}
is non-negative for all $1\leq n<m$ and positive for $m=n+1$, we have the estimate
\[
R_{\ceil{\alpha}}^{(\alpha)}(u^N)\geq-(-\lap)^{\!\{\alpha\}}_{n+1,n}\,\grad^{\ceil{\alpha}-1}\g^{(\alpha)}_{n+1} \grad^{\ceil{\alpha}-1}\g^{(\alpha)}_{n}
\left|\frac{\grad^{\ceil{\alpha}-1}u_{n+1}^{N}}{\grad^{\ceil{\alpha}-1}\g^{(\alpha)}_{n+1}} -\frac{\grad^{\ceil{\alpha}-1}u_{n}^{N}}{\grad^{\ceil{\alpha}-1}\g^{(\alpha)}_{n}}\right|^{2},
\]
from which, after sending $N\to\infty$, we deduce the necessary conditions
\[
 \frac{\grad^{\ceil{\alpha}-1}u_{n+1}}{\grad^{\ceil{\alpha}-1}\g^{(\alpha)}_{n+1}} -\frac{\grad^{\ceil{\alpha}-1}u_{n}}{\grad^{\ceil{\alpha}-1}\g^{(\alpha)}_{n}}=0
    \quad\text{ for all }n\in\N,
\]
and $u_n=0$ for all $n\le0$, by our zero-extension convention. The solution space of this difference equation is one-dimensional and spanned by $\g^{(\alpha)}$, hence $u=c\,\g^{(\alpha)}$ for a constant $c\in\C$. By~\eqref{eq:rho_g_asy}, such $u$ can belong to $\ell^{2}(\N)$ only if $c=0$, i.e. $u\equiv0$. The proof of non-attainability, and consequently of optimality and of Theorem~\ref{thm:opt_frac_hardy} is complete.
\qed

\subsection{Proof of Theorem~\ref{thm:clas_frac_hardy}}
\label{subsec:pr:clas_ineq}

We already know from Theorem~\ref{thm:opt_frac_hardy} that
\[
    \sum_{n=1}^\infty \overline{u_n} (-\lap)^\alpha u_n
      > \sum_{n=1}^\infty \rho^{(\alpha)}_n\abs{u_n}^2
\]
for all non-trivial $u\in\ell^{2}(\N)$. The strict inequality is a consequence of the non-attainability of $\rho^{(\alpha)}$. In this proof, we will show that
\begin{equation}
    \label{eq:lower_bound_for_rho_a}
    \rho^{(\alpha)}_n 
      \geq \frac{\Gamma^2(\alpha+1/2)}{\pi} \frac{1}{(n+\ceil{\alpha}-1)^{2\alpha}}
      \quad\text{ for all } n\ge1.
\end{equation}
Combining the last two inequalities, we get
\[
    \sum_{n=1}^\infty \overline{u_n} (-\lap)^\alpha u_n
      > \frac{\Gamma^2(\alpha+1/2)}{\pi} \sum_{n=1}^\infty \frac{\abs{u_n}^2}{(n+\ceil{\alpha}-1)^{2\alpha}}
\]
for any non-trivial $u\in\ell^{2}(\N)$. Shifting the index $n$ then immediately yields the inequality \eqref{eq:thm:clas_frac_hardy} for any $u\in\ell^{2}(\N)$ satisfying $u_n=0$ for all $n<\ceil{\alpha}$. Moreover, the inequality is strict unless $u\equiv0$. The optimality of the constant $\Gamma^2(\alpha+1/2)/\pi$ is a consequence of the asymptotics of weight $\rho^{(\alpha)}$, see~\eqref{eq:rho_g_asy}, and point \eqref{it:rem:optimality:3} of Remark~\ref{rem:optimality}.

It remains to verify the inequality~\eqref{eq:lower_bound_for_rho_a}. Bearing formula~\eqref{eq:thm:opt_frac_hardy} in mind, it is equivalent to showing that
\begin{equation}
    \frac{\Gamma(2n+1)}{\Gamma(2n+2\alpha+1)} \geq \frac{1}{(2n+2\ceil{\alpha})^{2\alpha}}
    \quad\text{ for all }n\in\N_0.
\label{eq:gamma_ratio_ineq}
\end{equation}
With the aid of \textit{Gautschi's inequality}~\cite[Eq.~(5.6.4)]{DLMF}, which implies
\[
\frac{\Gamma(x)}{\Gamma(x+s)}\geq\frac{1}{x^{s}} \quad\mbox{ for all } x>0 \mbox{ and } s\in(0,1],
\]
we deduce, for any $n\in\N_0$ and $\alpha>0$, that
\begin{align*}
 \frac{\Gamma(2n+1)}{\Gamma(2n+2\alpha+1)}&=\frac{1}{(2n+1)(2n+2)\cdots(2n+\ceil{2\alpha}-1)}\frac{\Gamma(2n+\ceil{2\alpha})}{\Gamma(2n+\ceil{2\alpha}+\{2\alpha\})} \\
 &\geq\frac{1}{(2n+\ceil{2\alpha})^{\ceil{2\alpha}-1}}\frac{1}{(2n+\ceil{2\alpha})^{\{2\alpha\}}}
 =\frac{1}{(2n+\ceil{2\alpha})^{2\alpha}},
\end{align*}
from which the inequality \eqref{eq:gamma_ratio_ineq} follows since $\ceil{2\alpha}\leq2\ceil{\alpha}$. The proof of Theorem~\ref{thm:clas_frac_hardy} is complete.
\qed

\begin{remark}
    The inequality in~\eqref{eq:gamma_ratio_ineq}, and therefore also in~\eqref{eq:lower_bound_for_rho_a}, can be shown to be strict. In fact, one has a slightly stronger estimate
    \[
     \frac{\Gamma(2n+1)}{\Gamma(2n+2\alpha+1)} > \frac{1}{(2n+\alpha+1)^{2\alpha}}
    \quad\text{ for all }n\in\N_0 \mbox{ and }\alpha>0.
    \]
\end{remark}

\subsection*{Acknowledgement}
This research was supported by the CTU Future Fund (Project ID: CVUT-BrF-26-23028S).

\bibliographystyle{acm}

\end{document}